\crefname{section}{section}{sections}
\crefname{subsection}{subsection}{subsections}
\Crefname{section}{Section}{Sections}
\Crefname{subsection}{Subsection}{Subsections}
\Crefname{figure}{Figure}{Figures}
\def\addlegendimage{\csname pgfplots@addlegendimage\endcsname}
\definecolor{sixclassRdYlBu1}{rgb}{0.84,0.19,0.15}
\definecolor{sixclassRdYlBu2}{rgb}{0.99,0.55,0.35}
\definecolor{sixclassRdYlBu3}{rgb}{1.0,0.88,0.56}
\definecolor{sixclassRdYlBu4}{rgb}{0.88,0.95,0.97}
\definecolor{sixclassRdYlBu5}{rgb}{0.57,0.75,0.86}
\definecolor{sixclassRdYlBu6}{rgb}{0.27,0.46,0.71}
\theoremstyle{plain}
\newtheorem{lemma}{Lemma}[section]
\newtheorem{theorem}[lemma]{Theorem}
\newtheorem{proposition}[lemma]{Proposition}
\newcommand{\hS}[1]{\hspace{#1pt}}
\DeclareMathOperator{\integralend}{d}
\newcommand{\intend}[1]{\integralend \hS{-1.25} #1}
\newcommand{\dx}{\intend{x}}
\DeclareMathOperator*{\argmin}{argmin}
\DeclareMathOperator*{\argmax}{arg\,max}
\DeclareMathOperator{\supp}{supp}
\DeclareMathOperator{\Span}{span}
\newcommand{\Params}{\mathcal{P}}
\newcommand{\ParamsTrain}{\mathcal{P}_\textnormal{train}}
\newcommand{\ParamsVer}{\mathcal{P}_\textnormal{val}}
\newcommand{\dimVH}{{N_H}}
\newcommand{\dimVh}{{N_h}}
\newcommand{\grid}{\mathcal{T}_h}
\newcommand{\Gridh}{\mathcal{T}_H}
\newcommand{\Kmu}{\mathbb{K}_\mu}
\newcommand{\Kmurb}{\mathbb{K}_\mu^{rb}}
\newcommand{\Ktmu}{\mathbb{K}_{T,\mu}}
\newcommand{\Ktmurb}{\mathbb{K}_{T,\mu}^{rb}}
\newcommand{\Ktqo}{\mathbb{K}_{T,q}^{0}}
\newcommand{\Ktqrb}{\mathbb{K}_{T,q}^{rb}}
\newcommand{\KT}{J_T}
\newcommand{\uhkm}[1][\mu]{u_{H,k,#1}}
\newcommand{\uhkmcoeff}{\underline{u}_{H,k,\mu}}
\newcommand{\uhkmsm}{u_{H,k,\mu}^{\text{ms}}}
\newcommand{\uhkmsmcoeff}{\underline{u}_{H,k,\mu}^{\text{ms}}}
\newcommand{\vf}{v^{\text{f}}}
\newcommand{\vft}[1][T]{v^\text{f}_{#1}}
\newcommand{\uft}[1][T]{u^\text{f}_{#1}}
\newcommand{\ufti}[1]{u^\text{f}_{T_{#1}}}
\newcommand{\vfti}[1]{v^\text{f}_{T_{#1}}}
\newcommand{\IH}{\mathcal{I}_H}
\newcommand{\CI}{C_{\mathcal{I}_H}}
\newcommand{\CO}{C_{k,\text{ovl}}}
\newcommand{\CPG}{\gamma_k}
\newcommand{\QQ}{{\mathcal{Q}}}
\newcommand{\QQm}{{\QQ_{\mu}}}
\newcommand{\QQkm}{{\QQ_{k,\mu}}}
\NewDocumentCommand{\QQktm}{O{T} O{\mu}}{\QQ^{#1}_{k,#2}}
\NewDocumentCommand{\QQktmrb}{O{T} O{\mu}}{\QQ^{#1,rb}_{k,#2}}
\NewDocumentCommand{\QQktmts}{O{T} O{\mu}}{\QQ^{#1,TS}_{k,#2}}
\newcommand{\Vfh}{V^{\text{f}}_{h}}
\newcommand{\Vfhkt}[1][T]{V^{\text{f}}_{h,k,#1}}
\newcommand{\Vfrbkt}[1][T]{V^{\text{f},rb}_{k,#1}}
\newcommand{\Wfrbkt}[1][T]{W^{\text{f},rb}_{k,#1}}
\newcommand{\Tlast}{T_{\abs{\mathcal{T}_H}}}
\newcommand{\Vts}{\mathfrak{V}}
\newcommand{\Wtsrb}{\mathfrak{W}^{rb}}
\newcommand{\Vtsrb}{\mathfrak{V}^{rb}}
\newcommand{\uts}{\mathfrak{u}}
\newcommand{\utsm}{\mathfrak{u}_{\mu}}
\newcommand{\utsmrb}{\mathfrak{u}_{\mu}^{rb}}
\newcommand{\vts}{\mathfrak{v}}
\newcommand{\Btsm}{\mathfrak{B}_{\mu}}
\newcommand{\Btsq}[1][q]{\mathfrak{B}_{q}}
\newcommand{\Fts}{\mathfrak{F}}
\newcommand{\sumT}{\sum_{T\in\Gridh}}
\newcommand{\Qa}{Q}
\newcommand{\dimVfrbkt}{N_T}
\newcommand{\rbt}[2][T]{\varphi_{#1,#2}}
\newcommand{\rbts}[1]{\mathfrak{b}_{#1}}
\newcommand{\rbtscoeff}[1]{\underline{\mathfrak{b}}_{#1}}
\newcommand{\rbestt}[2][T]{\psi_{#1,#2}}
\newcommand{\rbestts}[1]{\mathfrak{c}_{#1}}
\newcommand{\rbesttscoeff}[1]{\underline{\mathfrak{c}}_{#1}}
\DeclarePairedDelimiterX\mengenA[1]{\lbrace}{\rbrace}{#1}
\DeclarePairedDelimiterX\mengenB[2]{\lbrace}{\rbrace}{#1\, \delimsize\vert \, #2}
\newcommand{\set}[2][\relax]{
	\ifx#1\relax \ensuremath{
		\mengenA*{#2}}
	\else \ensuremath{%
		\mengenB*{#1}{#2}}
	\fi}
\DeclareRobustCommand{\minwidthbox}[2]{%
	\ifmmode
	\expandafter\mathmakebox
	\else
	\expandafter\makebox
	\fi
	[\ifdim#2<\width\width\else#2\fi]{#1}%
}
\DeclarePairedDelimiter{\abs}{|}{|}
\DeclarePairedDelimiterX\skal[2]{(}{)}{#1\,,\,#2}
\DeclarePairedDelimiter{\norm}{\lVert}{\rVert}
\DeclareFontFamily{U}{matha}{\hyphenchar\font45}
\DeclareFontShape{U}{matha}{m}{n}{
      <5> <6> <7> <8> <9> <10> gen * matha
      <10.95> matha10 <12> <14.4> <17.28> <20.74> <24.88> matha12
      }{}
\DeclareSymbolFont{matha}{U}{matha}{m}{n}
\DeclareFontFamily{U}{mathx}{\hyphenchar\font45}
\DeclareFontShape{U}{mathx}{m}{n}{
      <5> <6> <7> <8> <9> <10>
      <10.95> <12> <14.4> <17.28> <20.74> <24.88>
      mathx10
      }{}
\DeclareSymbolFont{mathx}{U}{mathx}{m}{n}
\DeclareMathDelimiter{\vvvert}{0}{matha}{"7E}{mathx}{"17}
\DeclarePairedDelimiterXPP{\snorm}[1]{}{\lVert}{\rVert}{_{1}}{\ifblank{#1}{\:\cdot\:}{#1}}
\DeclarePairedDelimiterXPP{\anorm}[1]{}{\lVert}{\rVert}{_{a,\mu}}{\ifblank{#1}{\:\cdot\:}{#1}}
\DeclarePairedDelimiterXPP{\Snorm}[1]{}{\vvvert}{\vvvert}{_{1}}{\ifblank{#1}{\:\cdot\:}{#1}}
\DeclarePairedDelimiterXPP{\Anorm}[1]{}{\vvvert}{\vvvert}{_{a,\mu}}{\ifblank{#1}{\:\cdot\:}{#1}}
\DeclarePairedDelimiterXPP{\SMnorm}[1]{}{\vvvert}{\vvvert}{_{1,\mu}}{\ifblank{#1}{\:\cdot\:}{#1}}
\newcommand{\labeltext}[2]{%
  \@bsphack
  \csname phantomsection\endcsname 
  \def\@currentlabel{#1}{\label{#2}}%
  \@esphack
}
\DeclareOldFontCommand{\rm}{\normalfont\rmfamily}{\mathrm}
\title{An Online Efficient Two-Scale Reduced Basis Approach for the Localized Orthogonal Decomposition
	\footnote{{\textbf{Funding:}} The authors acknowledge funding by the Deutsche Forschungsgemeinschaft under Germany’s Excellence Strategy EXC 2044 390685587, Mathematics M\"unster: Dynamics -- Geometry -- Structure.
		The first author also acknowledges funding by the DFG contract OH 98/11-1.
}}
\author{Tim Keil$^\dagger$, Stephan Rave\thanks{Mathematics Münster, Westfälische Wilhelms-Universität Münster, Einsteinstr. 62, D-48149 M\"unster, \url{{tim.keil,stephanrave}@uni-muenster.de}}}
\begin{document}

\maketitle

\begin{abstract}
	We are concerned with employing Model Order Reduction (MOR) to efficiently solve parameterized multiscale problems
    using the Localized Orthogonal Decomposition (LOD) multiscale method.
	Like many multiscale methods, the LOD follows the idea of separating the problem into localized fine-scale
	subproblems and an effective coarse-scale system derived from the solutions of the local problems.
	While the Reduced Basis (RB) method has already been used to speed up the solution
	of the fine-scale problems, the resulting coarse system remained untouched, thus limiting the achievable speed
    up.
	In this work we address this issue by applying the RB methodology to a new two-scale formulation of
	the LOD.
    By reducing the entire two-scale system, this two-scale Reduced Basis LOD (TSRBLOD) approach, 
    yields reduced order models that are completely
	independent from the size of the coarse mesh of the multiscale approach, allowing an
	efficient approximation of the solutions of parameterized multiscale problems even for very
	large domains. 
    A rigorous and efficient a posteriori estimator bounds the model reduction error,
    taking into account the approximation error for both the local fine-scale problems and
	the global coarse-scale system.

\par\vskip\baselineskip\noindent
\textbf{Keywords}: model order reduction, localized orthogonal decomposition,
	multiscale problems, reduced basis method, a posteriori error estimation, two-scale formulation

\par\vskip\baselineskip\noindent
\textbf{AMS Mathematics Subject Classification}: 35J20, 65N15, 65N30 
\end{abstract}

\section{Introduction}
\label{sec:introduction} 
The numerical approximation of partial differential equations that exhibit multiscale structures has many applications in the natural sciences.
Typical examples are, for instance, the simulation of composite materials or of flow in porous media.
Due to the different spatial or temporal scales at which the physics in such problems are modeled,
a direct solution with standard methods such as finite elements is often prohibitively expensive, as very fine meshes over large computational domains are required to resolve all relevant features.

Numerical multiscale methods are designed to overcome this issue by resolving the micro-structures only locally to solve small cell problems that yield the fine-scale information required to build an effective coarse-scale system, which then can be solved with modest computational effort.
Some of these multiscale methods, such as the Heterogeneous Multiscale Method (HMM)~\cite{EEnq},
are based on ideas from mathematical homogenization and aim at computing effective coefficients for an appropriate coarse-scale equation. 
Other approaches, such as the Multiscale Finite Element Method (MsFEM)~\cite{MsFEM,EH09} or the Generalized Finite Element Method (GFEM)~\cite{LiptonBabuska}, instead construct coarse-scale elements that incorporate the local fine-scale features of the solution and then approximate the solution in the space spanned by these multiscale elements.

The Localized Orthogonal Decomposition (LOD)~\cite{MP14}, which itself is based on the Variational Multiscale Method (VMM)~\cite{HFMQ1998}, instead makes use of a splitting of the full fine-scale approximation space into a negligible fine-scale component and an orthogonal multiscale space in which the solution is sought.
This multiscale space is then approximated by computing localized approximations of the orthogonal projection into the finescale space (corrector problems).
For an extensive overview of the LOD, we refer to~\cite{LODbook} and the references therein.
In this work we consider a Petrov--Galerkin formulation of the LOD~\cite{elf},
which has lower storage and communication requirements for the computed fine-scale data in comparison to the original Galerkin formulation.

Many problems modeled with partial differential equations include parameters which need to be varied, e.g., to find an optimal choice of parameter values w.r.t.\ some cost functional.
Due to the large costs of repeatedly solving the problem for different parameters in such workflows, Model Order Reduction (MOR) techniques are often used to replace the model at hand by a quickly evaluable surrogate. 
In this work we consider Reduced Basis (RB) methods, which build such a Reduced Order Model (ROM) by projecting the model equations onto a problem adapted subspace spanned by solutions of the Full Order Model (FOM) for appropriately selected \emph{snapshot} parameters.
For a detailed overview and discussion on RB methods, we refer to the monographs~\cite{HesthavenRozzaEtAl2016,QuarteroniManzoniEtAl2016} and the tutorial introduction~\cite{HAA2017}.

For large, parameterized multiscale problems, RB methods need to be combined with numerical multiscale methods, as otherwise the solution of the FOM for a single snapshot parameter might already be computationally infeasible. 
One possible approach is to speed up the solution of the individual cell problems using RB techniques, as is done in~\cite{Bo08}
in the context of numerical homogenization, in~\cite{AB12,AB13,AB14_2,AB14,A19} for the HMM, in~\cite{efendiev2013generalized,HesthavenZhangEtAl2015,Nguyen2008} for the MsFEM, or in~\cite{RBLOD} for the LOD\@.
This approach is applicable both for problems, where the fine-scale data variation over the domain is parameterized, resulting in a single ROM for all cell problems~\cite{AB12,AB13,AB14_2,AB14,A19,Bo08,HesthavenZhangEtAl2015}, or for general parameterized problems, where for each individual cell problem a dedicated ROM is built~\cite{RBLOD, Nguyen2008}.
Some recent works have considered the case where the multiscale data varies only in some cells, e.g., caused by perturbations, by a sequence of modification over time or by other parameterizations, such that individual fine-scale solutions can be reused~\cite{HKM20,HM19,MV20} or a ROM can be built in case of parameterized perturbations~\cite{MV21}.

All these approaches have in common that, while the constructed surrogates are independent of the resolution of the fine-scale mesh, the effort for their evaluation still scales with the size of the used coarse mesh.
Hence, if the coarse mesh itself is large, the repeated assembly and solution of the coarse system still can become a computational bottleneck.
Further, the influence of the approximation error of the cell problem ROMs on the coarse system is not rigorously controlled.

In this work we introduce a new two-scale reduced basis method for the LOD (TSRBLOD) that takes both the local corrector
problems as well as the coarse-scale problem into account to produce a single small-size ROM that no longer requires explicit solutions of local subproblems in the online phase.
The model is constructed based on a new two-scale formulation of the LOD as a single variational problem, inspired by
the theory of two-scale convergence in mathematical homogenization~\cite{allaire}.
As the computation of solution snapshots for this model would still be computationally expensive, we combine this
formulation with a preceding reduction of the fine-scale corrector problems similar to the RBLOD approach in~\cite{RBLOD}.
Rigorous and efficient a posteriori bounds derived from the two-scale formulation control the error over this entire
two-stage reduction process.

While our two-scale formulation of the LOD is new, an online-efficient RB-ROM for locally periodic homogenization problems was developed for the HMM in~\cite{OS12,schaefer13,OhlbergerSchaeferEtAl2018} based on the two-scale formulation in~\cite{Ohlberger2005}.
Further, we mention localized MOR techniques~\cite{BuhrIapichinoEtAl2020}, which can also be employed to obain online-efficient ROMs for large multiscale problems.
Whereas numerical multiscale methods build on the idea of scale separation and computing effective global systems from
local information, these methods are usually based on the idea of decomposing the computational domain and deriving a globally coupled ROM
from local ROMs associated with the domain decomposition. 

This paper is organized as follows. In \cref{sec:problem} we introduce the considered model problem, after which we biefely review the LOD method in \cref{sec:multiscale}.
\Cref{sec:two_scale} is devoted to our two-scale formulation of the LOD.
In particular, we prove inf-sup stability of the introduced two-scale bilinar form, from which we derive error bounds for the two-scale system.
In \cref{sec:MOR} we detail our new two-stage reduction approach for the LOD based on this two-scale formulation.
The numerical experiments in \cref{sec:experiments} show the efficiency of the method, in particular for large problems.
We end with some concluding remarks in \cref{sec:conclusion}.

\section{Problem formulation}\label{sec:problem}
Let $\Params \subset \mathbb{R}^p, p \in \mathbb{N}$ be a parameter space and $\Omega \in \mathbb{R}^d$ a bounded Lipschitz domain. 
We consider the following prototypical parameterized elliptic partial differential equation: for a fixed parameter $\mu
\in \Params$,
find $u_{\mu}$ such that
\begin{equation}
\begin{aligned}
\label{eq:problem_classic}
- \nabla \cdot A_{\mu}(x)  \nabla u_{\mu}(x) &= f(x), \qquad \text{ in }\Omega, \\
u_{\mu}(x) & = 0, \quad \;\; \quad  x \in \partial \Omega.
\end{aligned} 
\end{equation}
We assume that the parameter-dependent coefficient field $A_{\mu}$ has a multiscale structure that renders a direct
solution of~\eqref{eq:problem_classic} using, e.g., finite elements computationally infeasible due to the
high mesh resolution required to resolve all features of $A_\mu$.

Further, we assume $A_{\mu} \in L^{\infty}(\Omega, \mathbb{R}^{d \times d})$ to be symmetric and uniformly elliptic w.r.t.\
$x$ and $\mu$, i.e., there exist $0 < \alpha \leq \beta < \infty$ such that 
\begin{equation}\label{eq:ellipticity}
    \forall\mu\in\Params\ \forall x\in \Omega\ \forall \xi \in \mathbb{R}^d: \ \alpha |\xi|^2 \leq  \xi^T \cdot A_{\mu}(x) \cdot \xi \leq \beta |\xi|^2,
\end{equation}
and we let $\kappa:=\beta/\alpha$ be the maximum contrast of $A_{\mu}$.
Moreover, let $f \in L^2(\Omega)$. 

We consider the weak formulation of \eqref{eq:problem_classic}:
for $\mu \in \Params$, we seek $u_{\mu} \in V:=H^1_0(\Omega)$ such that 
\begin{equation}
\label{eq:problem_weak}
a_{\mu}(u_\mu,v) =  F(v) \qquad \forall v \in V,
\end{equation}
where
\begin{equation*}
    a_{\mu}(u,v) := \int_{\Omega}^{} \left(  A_{\mu}(x)  \nabla u(x) \right) \cdot  \nabla v(x)\dx \qquad
    \text{ and } \qquad F(v) := \int_\Omega f(x)v(x)\dx.
\end{equation*} 
Due to our assumptions, $a_{\mu}$ is a continuous, coercive bilinear form on $V$ and $F \in V^\prime$, s.t.~\eqref{eq:problem_weak}
admits a unique solution by the Lax-Milgram theorem.

As a final assumption, which will be required in \cref{sec:MOR} to obtain an online-efficient
reduced order model, we assume parameter-separability of $A_\mu$, i.e.,
we assume $A_\mu$ to have a decomposition $A_\mu = \sum_{q=1}^{\Qa} \theta_q(\mu) A_q$
with non-parametric $A_q \in L^{\infty}(\Omega)$ and arbitrary $\theta_q:\Params \to \mathbb{R}$.
This gives rise to the corresponding decomposition
\begin{align}\label{eq:parameter_separable}
    a_\mu(u, v) &= \sum_{q = 1}^{\Qa} \theta_q(\mu)\, a_q(u, v),
\end{align}
of $a_\mu$, where $a_q(u,v) :=\int_{\Omega}(A_q(x) \nabla u(x))\cdot \nabla v(x)\dx$.
In case $A_\mu$ does not exhibit such a decomposition, empirical interpolation~\cite{BMNP2004}
can be used to obtain an approximate decomposition of $A_\mu$.

Finally, for $v\in V = H^1_0(\Omega)$ we introduce the following norms:
	\begin{equation*}
	\snorm{v}       := \int_\Omega \abs{\nabla v(x)}\dx, \qquad
	\anorm{v}       := \int_\Omega \abs{A_\mu^{1/2}\nabla v(x)}\dx.
	\end{equation*}
Note that $\snorm{}$ is a norm on $V$ due to Friedrich's inequality.

We remark that our approach can be easily generalized to parametric $f$ and other
boundary conditions.

\section{Localized Orthogonal Decomposition}
\label{sec:multiscale}
In this section we review the basic concepts of the LOD. 
The general idea of the LOD is to decompose the solution space into a subspace $\Vfh$ of negligible
fine-scale variations and an $a_\mu$-orthogonal low-dimensional coarse space of multiscale functions,
in which the solution is approximated.
This multiscale space is constructed by computing suitable fine-scale corrections $\QQm(u_H)$ of functions from a 
given coarse finite-element space $V_H$.
Due to the dampening of high-frequency oscillations by $a_\mu$, these corrections can then be approximated by the
solution of decoupled localized corrector problems.

In recent years, there have been various formulations of the LOD in terms of localization, interpolation,  approximation schemes and applications.
Since we are concerned with the case where storage restrictions may prevent to store explicit solutions of the corrector problems, we focus on the Petrov--Galerkin version of the LOD (PG--LOD)~\cite{elf}, which is favorable in this respect.
For further background, we refer to~\cite{LODbook}.

\subsection{Discretization and patches}
Let $V_h \subset V$ be a conforming finite-element space of dimension $\dimVh$, and let $\grid$ be the corresponding
shape regular mesh over the computational domain $\Omega$. 
We assume that the mesh size $h$ is chosen such that all features of $A_\mu$ are resolved, making a global
solution within $V_h$ infeasible. 
Further, we assume to be given a coarse mesh $\Gridh$ with mesh size $H \gg h$ that is aligned with $\grid$, and let
\begin{equation*}
    V_H := V_h \cap \mathcal{P}_1(\Gridh),     
\end{equation*}
where $\mathcal{P}_1(\Gridh)$ denotes $\Gridh$-piecewise affine functions that are continuous on $\Omega$.
We denote the dimension of $V_H$ by $\dimVH$.

For an arbitrary set $\omega \subseteq \Omega$, we define coarse grid element patches $U_k(\omega) \subset \Omega$ of size $0 \leq k \in \mathbb{N}$ by
\begin{equation*}
U_0(\omega) := \omega,  \qquad \text{and}\qquad
U_{k+1}(\omega) := \operatorname{Int}\biggl(\,\overline{\bigcup \set[T \in \Gridh]{ \overline{U_k(\omega)} \cap \overline{T} \neq \emptyset}}\,\biggr),
\end{equation*}
where $\operatorname{Int}(X)$ is the interior of the set $X$.
For a given patch size $k$, let
\begin{equation*}
    \CO := \max_{x\in\Omega} \,\#\{T \in \Gridh \,\vert\, x \in U_k(T)\}
\end{equation*}
be the maximum number of element patches overlapping in a single point of $\Omega$.

\subsection{Localized multiscale space}
In order to define the fine-scale space $\Vfh$, we consider a (quasi-)interpolation operator
$\IH : V_h \to V_H$, and let $\Vfh := \ker(\IH)$.
Multiple choices for such an interpolation operator are possible (see~\cite{MH16, PS} for an overview). 
In recent literature, using an operator that is based on local $L^2$-projections~\cite{Pe15} has proven advantageous.
However, for the following error analysis we only require that $\IH$ is linear, $\snorm{}$-continuous and idempotent
on $V_H$, i.e.,
    \begin{align*}
    \IH(v_H) &= v_H & \forall v_H &\in V_H,\\
    \snorm{\IH(v_h)} &\leq \CI \snorm{v_h} & \forall v_h &\in V_h.
    \end{align*}

Next, we define the fine-scale corrections $\QQm(v) \in \Vfh$, for a given $v_h \in V_h$, to be the solution of 
\[
a_{\mu}(\QQm(v_h),\vf) = a_{\mu}(v_h,\vf) \qquad\qquad \forall \; \vf \in \Vfh.
\]
Thus, $\QQm(v_h)$ is the $a_\mu$-orthogonal projection of $v_h$ onto $\Vfh$, and
the multiscale space
\begin{equation*}
    V_{H, \mu}^{\text{ms}} := (I - \QQm) (V_H)
\end{equation*}
is the $a_\mu$-orthogonal complement of $\Vfh$ in $V_h$, i.e.,
$V_h = V_{H,\mu}^{\text{ms}} \oplus_{a_{\mu}} \Vfh$.

However, even when $v_h$ has a small local support, $\QQm(v_h)$ will have global support,
and its computation will require the same effort as a global solution of \eqref{eq:problem_weak} in $V_h$.
Thus, we approximate $\QQm(v_h)$ using localized correctors $\QQktm(v_h) \in \Vfhkt$ in 
the patch-restricted fine-scale spaces $\Vfhkt: = \Vfh \cap H^1_0(U_k(T))$ given by
\begin{align} \label{eq:loc_cor_problems}
a_{\mu}(\QQktm(v_h),\vf) = a^T_{\mu}(v_h, \vf) \qquad \qquad \forall \; \vf \in \Vfhkt,
\end{align}
where $a^T_{\mu}$ denotes the bilinear form obtained by restricting the integration domain in the definition of $a_{\mu}$ to $T \in \Gridh$. 
We then define the localized corrector by 
\begin{equation}\label{eq:loc_corrector}
    \QQkm := \sumT\QQktm,
\end{equation}
and the localized multiscale space $V_{H,k,\mu}^{\text{ms}}$ by
\[
V_{H,k,\mu}^{\text{ms}}:= (I - \QQkm)(V_H).
\]
Due to the exponential decay of the correctors~\cite{MP14}, we choose a patch localization parameter of
$k \approx |\log H|$ to obtain a sufficient approximation.

\subsection{Petrov--Galerkin projection}
After computing $V_{H,k,\mu}^{\text{ms}}$, we determine an approximation of $u_\mu$ in this
$N_H$-dimensional multiscale space via Petrov-Galerkin projection. I.e., 
we let $u_{H,k,\mu}^{\text{ms}} \in V_{H,k,\mu}^{\text{ms}}$ be the solution of 
\begin{equation}
\label{eq:PG_}
a_{\mu}(\uhkmsm,v_H) =  F(v_H) \qquad \forall \; v_H \in V_H.
\end{equation}
To ensure that~\eqref{eq:PG_} has a unique solution, inf-sup stability of $a_\mu$ w.r.t.\ $V_{H,k,\mu}^{\text{ms}}$ and $V_H$
is required, which has been shown in~\cite{elf, HM19}.
Compared to these references, we use a slightly different definition of the inf-sup stability constant for~\eqref{eq:PG_}
by using $\snorm{}$ instead of $\anorm{}$ for the test space:
\begin{equation*}
	\CPG := \inf_{0\neq w_H\in V_H}\sup_{0\neq v_H\in V_H}
	\frac{a_\mu(w_H-\QQktm(w_H), v_H)}{\anorm{w_H-\QQktm(w_H)}\snorm{v_H}}.
\end{equation*}
For $k$ large enough, we have that
    \begin{equation}\label{eq:gamma_k}
        \CPG \approx \alpha^{1/2}\CI^{-1},
    \end{equation}
which can be proven with a simple modification of the argument in~\cite[Section 4]{HM19}.
In particular, w.l.o.g.\ we assume that $\CPG \leq \alpha^{1/2}$.

Writing the solution of~\eqref{eq:PG_} as $\uhkmsm=\uhkm - \QQkm(\uhkm)$ with $\uhkm \in V_H$,
we have the following a priori estimate, which was first shown in~\cite{elf}.
\begin{theorem}[A priori convergence result for the PG-LOD]
	\label{thm:pglod_convergence}
    For a fixed parameter $\mu \in \Params$, let 
    $u_{h,\mu} \in V_h$ be the finite-element solution of \eqref{eq:problem_weak} given by
    \begin{equation*}
        a_\mu(u_{h,\mu}, v_h) = F(v_h) \qquad\forall v_h \in V_h.
    \end{equation*}
    Then it holds that
	\begin{equation*}
        \norm{u_{h,\mu} - u_{H,k,\mu}}_{L^2} + \norm{u_{h,\mu} - u_{H,k,\mu}^{\text{ms}}}_{1} \lesssim (H + \theta^k
        k^{d/2}) \norm{f}_{L^2(\Omega)},
	\end{equation*}
	with $0 < \theta < 1 $ independent of $H$ and $k$, but dependent on the contrast $\kappa$.
\end{theorem}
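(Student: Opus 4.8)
The plan is to reduce the whole statement to a single energy-norm estimate for $u_{h,\mu}-\uhkmsm$ and then transfer it to the two quantities appearing in the bound. Since $\snorm{}$, $\anorm{}$ and $\norm{\cdot}_1$ are equivalent up to $\alpha$, $\beta$ and Friedrich's inequality, the second summand $\norm{u_{h,\mu}-\uhkmsm}_1$ is controlled as soon as $\anorm{u_{h,\mu}-\uhkmsm}$ is. For the first summand I would write $u_{h,\mu}-\uhkm=(u_{h,\mu}-\uhkmsm)-\QQkm(\uhkm)$, estimate the first difference in $L^2$ against its $H^1$-norm by Friedrich's inequality, and bound the corrector part $\QQkm(\uhkm)\in\Vfh$ using the key fine-scale estimate $\norm{\vf}_{L^2}\lesssim H\snorm{\vf}$ valid for every $\vf\in\Vfh=\ker(\IH)$. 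Combined with the stability of the localized correctors and the identity $\IH\uhkmsm=\uhkm$, which gives $\snorm{\uhkm}\leq\CI\snorm{\uhkmsm}$, this yields $\norm{\QQkm(\uhkm)}_{L^2}\lesssim H\norm{f}_{L^2}$, so that the $L^2$-term inherits exactly the claimed order.

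For the energy estimate itself, the starting point is Galerkin orthogonality: subtracting \eqref{eq:PG_} from the finite-element equation tested with $v_H\in V_H\subset V_h$ gives $a_\mu(u_{h,\mu}-\uhkmsm,v_H)=0$ for all $v_H\in V_H$. Combining this with the inf-sup constant $\CPG$ and the continuity bound $\anorm{v_H}\leq\beta^{1/2}\snorm{v_H}$ yields a Petrov--Galerkin quasi-optimality estimate $\anorm{u_{h,\mu}-\uhkmsm}\lesssim\inf_{w\in V_{H,k,\mu}^{\text{ms}}}\anorm{u_{h,\mu}-w}$, with hidden constant essentially $1+\beta^{1/2}/\CPG$. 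It then remains to exhibit one good competitor. I would take $w:=(I-\QQkm)\IH(u_{h,\mu})\in V_{H,k,\mu}^{\text{ms}}$ and use that $u_{h,\mu}-\IH(u_{h,\mu})\in\Vfh$, since $\IH$ is idempotent on $V_H$. As the ideal corrector $\QQm$ acts as the identity on $\Vfh$, this produces the algebraic identity $u_{h,\mu}-w=\QQm(u_{h,\mu})+(\QQkm-\QQm)(\IH u_{h,\mu})$, which splits the best-approximation residual into an ideal fine-scale part and a pure localization defect.

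These two terms carry the two parts of the bound. For the ideal fine-scale part, testing the finite-element equation with $\QQm(u_{h,\mu})\in\Vfh$ and using the corrector's defining orthogonality gives $\anorm{\QQm(u_{h,\mu})}^2=F(\QQm(u_{h,\mu}))\leq\norm{f}_{L^2}\norm{\QQm(u_{h,\mu})}_{L^2}$; inserting the fine-scale $L^2$-estimate once more produces $\anorm{\QQm(u_{h,\mu})}\lesssim H\norm{f}_{L^2}$. For the localization defect $(\QQkm-\QQm)(\IH u_{h,\mu})$ I would invoke the exponential decay of the correctors from~\cite{MP14}, which yields $\snorm{(\QQm-\QQkm)v_H}\lesssim\theta^k k^{d/2}\snorm{v_H}$ with $0<\theta<1$ depending only on the contrast $\kappa$; together with the $\snorm{}$-continuity $\snorm{\IH u_{h,\mu}}\leq\CI\snorm{u_{h,\mu}}$ and the a priori bound $\snorm{u_{h,\mu}}\lesssim\norm{f}_{L^2}$ from coercivity tested against $u_{h,\mu}$, this contributes $\lesssim\theta^k k^{d/2}\norm{f}_{L^2}$. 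Adding the two contributions and feeding them back through the quasi-optimality and the reduction of the first paragraph completes the proof. The genuine technical obstacle is the exponential decay estimate, the deep ingredient of the LOD, which I would cite rather than reprove; the remaining work is bookkeeping — keeping the mismatch between the $\snorm{}$-based test space and the $\anorm{}$-based trial space consistent, and tracking the dependence of all constants on $\kappa$, $\CI$ and $\CPG$.
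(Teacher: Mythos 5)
The paper does not actually prove this theorem: it only remarks that the argument of~\cite{elf} carries over to its setting, so there is no in-paper proof to compare against. Your reconstruction is precisely that standard argument (Petrov--Galerkin quasi-optimality from inf-sup stability plus Galerkin orthogonality, the competitor $(I-\QQkm)\IH(u_{h,\mu})$, the ideal-corrector bound $\anorm{\QQm(u_{h,\mu})}\lesssim H\norm{f}_{L^2(\Omega)}$, and the cited exponential decay of~\cite{MP14} for the localization defect $(\QQkm-\QQm)(\IH u_{h,\mu})$), and it is sound. Two caveats are worth recording. First, the fine-scale estimate $\norm{\vf}_{L^2}\lesssim H\snorm{\vf}$ on $\Vfh=\ker(\IH)$, which carries the entire $H$-term of the bound, does \emph{not} follow from the three properties of $\IH$ that the paper assumes (linearity, $\snorm{}$-continuity, idempotency on $V_H$); it is an additional approximation property of the quasi-interpolation operator, valid for the concrete choices used in practice (e.g.\ \cite[Example 3.1]{LODbook}, \cite{Pe15}), and must be added as a hypothesis --- which is presumably why the paper defers to~\cite{elf} rather than claiming the theorem under its minimal assumptions on $\IH$. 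Second, when you invoke ``stability of the localized correctors'' to bound $\snorm{\QQkm(\uhkm)}$, note that combining the paper's \cref{lemma:norm_prop_1} and \cref{thm:qqktm_bound} costs a factor $\CO^{1/2}\approx k^{d/2}$, which would pollute the $H$-term with a logarithmic factor once $k\approx|\log H|$; it is cleaner to split $\QQkm=\QQm+(\QQkm-\QQm)$ and use the $a_\mu$-orthogonal-projection property of $\QQm$ together with the decay estimate, giving a $k$-uniform stability constant.
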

Although our setting is slightly different to the one in \cite{elf} (in terms of localization and interpolation), the proof can still be followed analogously.
For a detailed discussion on the decay variable $\theta$, we refer to~\cite{MH16, HM19, MP14}.

\subsection{Computational aspects}
\label{sec:sub_comp_comp}
In order to solve \eqref{eq:PG_}, we need to solve multiple corrector problems \eqref{eq:loc_cor_problems} for every $T \in \Gridh$. 
These correctors are then used to assemble a localized multiscale matrix $\Kmu$ given by
\begin{equation} \label{eq:K}
\Kmu:= \sumT \Ktmu, \quad \left(\Ktmu \right)_{ji} := \skal{A_{\mu} (\chi_T \nabla - \nabla \QQktm) \phi_i}{\nabla \phi_j}_{U_k(T)},  
\end{equation}
where $\chi_T$ denotes the indicator function on $T$ and $\phi_i$ the finite-element basis functions of $V_H$. 
Solving \eqref{eq:PG_} then is equivalent to solving the linear system
\begin{equation}\label{eq:PG_in_matrices}
   \Kmu \cdot \uhkmsmcoeff = \mathbb{F},
\end{equation}
where $\mathbb{F}_i := F(\phi_i)$, and $\uhkmsmcoeff \in \mathbb{R}^{N_H}$ is the vector of coefficients of $\uhkmsm$
w.r.t.\ the basis of $V_{H,k,\mu}^{\text{ms}}$ corresponding to the finite-element basis $\phi_i$.

Compared to Galerkin projection onto $V_{H,k,\mu}^{\text{ms}}$, the system matrix $\Kmu$ of the Petrov-Galerkin
formulation has a smaller sparsity pattern, and we need less computational work to assemble the matrix.
Every localized corrector and thus each local contribution to $\Kmu$ can be computed in parallel without any communication and can be deleted after the contribution $\Ktmu$ has been computed.
In particular, note that the local contribution matrices $\Ktmu$ only have non-zeros in columns $i$ for which $T \subseteq \supp \phi_i$.

Overall, to compute the LOD solution the following steps are required:
\begin{enumerate}
    \item For every $T \in \Gridh$: Compute $\QQktm(\phi_i)$ by solving \eqref{eq:loc_cor_problems} for each $i$ s.t. $T \subseteq \supp \phi_i$. Assemble $\Ktmu$ according to \eqref{eq:K}. 
    \item Assemble the localized multiscale stiffness matrix $\Kmu = \sumT \Ktmu$ from the local contributions computed in step 1.
	\item Solve equation \eqref{eq:PG_in_matrices} to compute $\uhkmsmcoeff$.
\end{enumerate}
In general, neither of the steps 1--3 is computational negligible, and each of these steps has to be repeated to
obtain a solution for a new $\mu \in \Params$.
Note that step 1 requires computations on the fine-scale level whereas steps 2 and 3 solely depend on the size of the coarse mesh $\Gridh$.
In~\cite{RBLOD}, RB approximations of the local corrector problems were introduced to obtain a model that is
independent from the size of $\grid$.
However, for large $\Gridh$, the costs of solving the reduced corrector problems in step 1 and the further computations
in steps 2 and 3 still can be significant.
The two-scale reduction approach introduced in this work takes all computational steps into account and yields a
reduced order model that is independent from the sizes of both $\grid$ and $\Gridh$.

\section{Two-scale formulation of the PG--LOD}\label{sec:two_scale}
We formulate the PG--LOD method in a two-scale formulation.
In particular, we aim at considering the PG--LOD solution as the solution of one single system where the coarse system \eqref{eq:PG_} and all fine scale corrections~\eqref{eq:loc_cor_problems} are solved at the same time.
This formulation will be the basis for the Stage 2 ROM constructed in \cref{sec:stage_2_red}.

\subsection{The two-scale bilinear form}
Let $\Vts$ denote the two-scale function space given by the direct sum of Hilbert spaces
     \begin{equation*}
        \Vts:= V_H \oplus \Vfhkt[T_1] \oplus \cdots \oplus \Vfhkt[\Tlast].
    \end{equation*}
In particular, for $\uts = (u_H, \uft[T_1], \dots, \uft[\Tlast]) \in \Vts$ we define the
two-scale $H^1$-norm of $\uts$ by
    \begin{equation*}
        \Snorm{\uts}^2 := \snorm{u_H}^2 + \sumT \snorm*{\uft}^2.
    \end{equation*}
On this space, we define the two-scale bilinear form $\Btsm \in \text{Bil}(\Vts)$ given by
    \begin{multline*}
    \Btsm\left((u_H, \ufti{1}, \dots, \ufti{{\abs{\mathcal{T}_H}}}),  (v_H, \vfti{1}, \dots, \vfti{{\abs{\mathcal{T}_H}}})\right):=
    \\ a_{\mu}(u_H - \sumT \uft, v_H) + \rho^{1/2}\sumT a_{\mu}(\uft, \vft) - a_{\mu}^T(u_H, \vft),
    \end{multline*}
with a stabilization parameter $\rho \geq 1$ that will be chosen later.
Further let $\Fts \in \Vts^\prime$ be given by
    \begin{align*}
    \Fts\left((v_H, \vfti{1}, \dots, \vfti{{\abs{\mathcal{T}_H}}})\right) &:= F(v_H),
    \end{align*}
and define the two-scale solution $\utsm \in \Vts$ of of the PG--LOD by the variational problem
\begin{equation}
\Btsm\left(\utsm,  \vts \right) = \Fts(\vts) \qquad \forall \vts \in \mathcal{V}. \label{eq:two_scale_PGLOD}
\end{equation}
We show, that \eqref{eq:two_scale_PGLOD} is equivalent to the original formulation
\eqref{eq:loc_cor_problems},~\eqref{eq:PG_}:
\begin{proposition}
    The two-scale solution $\utsm \in \Vts$ of \cref{eq:two_scale_PGLOD} is uniquely determined and given by
    \begin{equation}\label{eq:two_scale_solution}
        \utsm = \left[\uhkm,\, \QQktm[T_1](\uhkm),\, \ldots,\, \QQktm[\Tlast](\uhkm)\right].
    \end{equation}
\end{proposition}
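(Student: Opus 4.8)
The plan is to exploit the direct-sum structure $\Vts = V_H \oplus \Vfhkt[T_1] \oplus \cdots \oplus \Vfhkt[\Tlast]$. Since every $\vts \in \Vts$ decomposes into its coarse slot and its per-patch fine slots, it suffices to test \eqref{eq:two_scale_PGLOD} separately against the coarse functions $(v_H, 0, \dots, 0)$ with $v_H \in V_H$ and against the single-patch functions $(0, \dots, \vft[T], \dots, 0)$ with $\vft[T] \in \Vfhkt[T]$. These two families will decouple the two-scale problem into exactly the localized corrector problems \eqref{eq:loc_cor_problems} and the Petrov--Galerkin equation \eqref{eq:PG_}, so that both the claimed form of the solution and its characterizing equations line up slot by slot.

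First I would verify that the candidate $\utsm = [\uhkm, \QQktm[T_1](\uhkm), \dots, \QQktm[\Tlast](\uhkm)]$ solves \eqref{eq:two_scale_PGLOD}. Testing against a single-patch function $(0, \dots, \vft[T], \dots, 0)$, the coarse term and the right-hand side $\Fts$ vanish and only the $T$-summand of the fine part survives, leaving $\rho^{1/2}\bigl(a_\mu(\QQktm[T](\uhkm), \vft[T]) - a_\mu^T(\uhkm, \vft[T])\bigr) = 0$; this is precisely \eqref{eq:loc_cor_problems} for $v_h = \uhkm$, and it holds because $\rho^{1/2}$ multiplies both fine-scale terms and hence cancels. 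Testing against $(v_H, 0, \dots, 0)$, the fine terms drop out, and using $\sumT \QQktm[T](\uhkm) = \QQkm(\uhkm)$ from \eqref{eq:loc_corrector} I would obtain $a_\mu(\uhkm - \QQkm(\uhkm), v_H) = F(v_H)$, which is \eqref{eq:PG_} since $\uhkmsm = \uhkm - \QQkm(\uhkm)$. By linearity in the test argument this shows that the candidate is a solution, for any admissible $\rho \geq 1$.

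For uniqueness, since $\Vts$ is finite-dimensional it is enough to show that the homogeneous problem $\Btsm(\uts, \vts) = 0$ for all $\vts \in \Vts$ forces $\uts = 0$. Testing against the single-patch functions shows each fine component satisfies $a_\mu(\uft[T], \vft[T]) = a_\mu^T(u_H, \vft[T])$ for all $\vft[T] \in \Vfhkt[T]$, hence $\uft[T] = \QQktm[T](u_H)$ by coercivity of $a_\mu$ on $\Vfhkt[T]$; substituting into the coarse equation yields $a_\mu(u_H - \QQkm(u_H), v_H) = 0$ for all $v_H \in V_H$, and the inf-sup stability $\CPG > 0$ from \eqref{eq:gamma_k} then forces $u_H = 0$, whence $\uft[T] = \QQktm[T](0) = 0$ for every $T$. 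The verification part is a routine decoupling; the only genuine input is this inf-sup stability in the uniqueness step, and the one point I would check carefully is that the sum in $\Btsm$ ranges over both fine-scale terms, so that the $\rho^{1/2}$-cancellation making the solution independent of $\rho$ is legitimate.
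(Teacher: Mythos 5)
Your proposal is correct and follows essentially the same route as the paper's proof: verify the candidate by decoupling the test space into coarse and single-patch components (using the corrector equations and the PG--LOD equation), then prove uniqueness by showing the homogeneous problem forces $\uft[T] = \QQktm(u_H)$ for each $T$ and invoking the PG--LOD inf-sup stability to conclude $u_H = 0$. Your careful reading of the $\rho^{1/2}$ factor — that it multiplies the whole fine-scale residual $a_\mu(\uft,\vft) - a_\mu^T(u_H,\vft)$ — matches the paper's intended definition, as confirmed by its own use of the $\rho^{-1/2}$ cancellation in the uniqueness step.
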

\begin{proof}
    With $\utsm$ as in~\eqref{eq:two_scale_solution} we have for any
    $\vts = (v_H, \vft[T_1], \ldots, \vft[\Tlast])$
    \begin{align*}
        \Btsm (\utsm, \vts) &=
            a_\mu(\uhkm - \sumT \QQktm(\uhkm), v_H)\\
            &\hS{50}+ \rho^{1/2} \sumT a_\mu(\QQktm(\uhkm), \vft) - a_\mu^T(\uhkm, \vft) \\
            &= F(v_H) = \Fts(\vts),
    \end{align*}
    where we have used the definition of $\QQktm(\uhkm)$ to eliminate the sum over $\Gridh$,
    the definition of $\QQkm$~\cref{eq:loc_corrector} and the definition of $\uhkm$~\cref{eq:PG_}.

    To show that $\utsm$ is the only solution of~\eqref{eq:two_scale_solution}, 
    it suffices to show that $\Btsm(\uts, \vts) = 0$ for all 
    $\vts\in \Vts$ implies $\uts = (u_H, \uft[T_1], \ldots, \uft[\Tlast]) = 0$. 
    For that, first note that for $1 \leq i \leq \Tlast$ and each $\vft[T_i] \in \Vfhkt$ we have:
    \begin{equation*}
        a_\mu(\uft[T_i], \vft[T_i]) - a_\mu^T(u_H, \vft[T_i]) =
        \rho^{-1/2}\cdot \Btsm(\uts, (0, \ldots, 0, \vft[T_i], 0 \ldots, 0)) = 0,
    \end{equation*}
    hence, $\uft[T_i] = \QQktm(u_H)$. This implies
    $a_\mu(u_H - \QQkm(u_H), v_H) = \Btsm(\uts, (v_H, 0, \ldots, 0)) = 0$
    for all $v_H \in V_H$, which means $u_H = 0$ due to the inf-sup stability of the PG--LOD bilinear
    form. However, $u_H = 0$ implies $\QQktm(u_H) = 0$ for all $T \in \Gridh$.
\end{proof}

\subsection{Analysis of the two-scale bilinear form}\label{sec:two_scale_form}

We introduce two weighted norms $\Anorm{}$, $\SMnorm{}$ on $\Vts$, w.r.t.\ which we will show the inf-sup
stability of $\Btsm$ and derive approximation error bounds.
For arbitrary $\uts = (u_H, \uft[T_1], \dots, \uft[\Tlast]) \in \Vts$
these norms are given by
\begin{align*}
    \Anorm{\uts}^2 &:= 
    \anorm{u_H - \sumT \uft}^2
    + \rho\sumT \anorm{\QQktm(u_H) - \uft}^2,\\
    \SMnorm{\uts}^2 &:= 
    \snorm{u_H}^2
    + \rho\sumT \snorm{\QQktm(u_H) - \uft}^2.
\end{align*}
    
\begin{proposition}
    $\Anorm{}$ and $\SMnorm{}$ are norms on $\Vts$ for all $\mu \in \mathcal{P}$.
\end{proposition}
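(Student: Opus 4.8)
The plan is to exhibit both $\Anorm{}$ and $\SMnorm{}$ as pullbacks of genuine norms under suitable linear maps, so that the seminorm axioms come for free and only definiteness remains to be verified. Concretely, equip the product space $V^{1+\abs{\Gridh}}$ with the Euclidean combination of the energy norm $\anorm{}$ over the factors, and consider the linear map
\begin{equation*}
    L_a\colon \Vts \to V^{1+\abs{\Gridh}},\quad
    \uts \mapsto \Bigl(u_H - \sumT \uft,\ \rho^{1/2}\bigl(\QQktm[T_1](u_H) - \uft[T_1]\bigr),\ \dots,\ \rho^{1/2}\bigl(\QQktm[\Tlast](u_H) - \uft[\Tlast]\bigr)\Bigr),
\end{equation*}
which is well defined and linear since every local corrector $\QQktm$ depends linearly on its argument. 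By construction $\Anorm{\uts} = \norm{L_a \uts}$, with $\norm{}$ the chosen product norm. As $\anorm{}$ is a norm on $V$ — it is equivalent to $\snorm{}$ via the ellipticity bounds \eqref{eq:ellipticity}, which give $\alpha^{1/2}\snorm{v}\leq\anorm{v}\leq\beta^{1/2}\snorm{v}$, and $\snorm{}$ is a norm by Friedrich's inequality — the product norm is a norm, so $\Anorm{} = \norm{L_a(\cdot)}$ automatically inherits non-negativity, absolute homogeneity and the triangle inequality. The same reasoning applies verbatim to $\SMnorm{}$ with the analogous map $L_1$ whose first component is simply $u_H$ (measured in $\snorm{}$) and whose remaining components coincide with those of $L_a$. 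It thus remains to show that $L_a$ and $L_1$ are injective, i.e.\ that the two quantities are definite.

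For $\SMnorm{}$ this will be immediate: $\SMnorm{\uts} = 0$ forces $\snorm{u_H} = 0$, hence $u_H = 0$, and then each remaining term reads $\rho\,\snorm{\QQktm(0) - \uft}^2 = \rho\,\snorm{\uft}^2$, so $\uft = 0$ for every $T \in \Gridh$ by linearity of $\QQktm$. Thus $\uts = 0$ and $L_1$ is injective.

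The substantive case is $\Anorm{}$, and this is where I expect the main obstacle to lie. Assuming $\Anorm{\uts} = 0$, both summands vanish, so $\anorm{u_H - \sumT \uft} = 0$ and $\anorm{\QQktm(u_H) - \uft} = 0$ for every $T$; since $\anorm{}$ is a norm this yields $\uft = \QQktm(u_H)$ for all $T$ together with $u_H = \sumT \uft$. Substituting the former into the latter and invoking the definition \eqref{eq:loc_corrector} of the localized corrector gives $u_H = \sumT \QQktm(u_H) = \QQkm(u_H)$. The crux is then to deduce $u_H = 0$: the left-hand side lies in $V_H$, whereas the right-hand side lies in $\Vfh = \ker(\IH)$, so applying $\IH$ and using idempotency on $V_H$ gives $u_H = \IH(u_H) = \IH(\QQkm(u_H)) = 0$; equivalently, $V_H \cap \Vfh = \set{0}$ forces $u_H = 0$. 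Finally $u_H = 0$ propagates to $\uft = \QQktm(0) = 0$ for all $T$, so $\uts = 0$ and $L_a$ is injective. The only genuinely LOD-specific ingredient is this trivial-intersection property $V_H \cap \Vfh = \set{0}$; everything else is formal.
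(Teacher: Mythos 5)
Your proof is correct. The seminorm part coincides with the paper's argument (both exhibit the quantities as pull-backs of norms under linear maps), but your definiteness step for $\Anorm{}$ takes a genuinely different route. The paper concludes $u_H = 0$ from the vanishing of the \emph{first} summand alone, via the chain $\snorm{u_H} = \snorm{\IH(u_H - \sumT \uft)} \leq \CI \snorm{u_H - \sumT \uft} \leq \CI \alpha^{-1/2} \anorm{u_H - \sumT \uft}$, which invokes the $\snorm{}$-continuity of $\IH$ together with the ellipticity bound; you instead combine \emph{both} summands to obtain the identity $u_H = \QQkm(u_H) \in V_H \cap \Vfh$ and then use only the algebraic facts that $\IH$ is idempotent on $V_H$ and that $\Vfh = \ker(\IH)$, i.e., $V_H \cap \Vfh = \{0\}$. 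Your variant is more economical in its hypotheses: neither the continuity constant $\CI$ nor the ellipticity constants enter, and it isolates the single LOD-specific ingredient (the trivial intersection) cleanly. What the paper's quantitative detour buys is the estimate $\snorm{u_H} \leq \CI\alpha^{-1/2}\anorm{u_H - \sumT\uft}$ itself, which is not discarded after the proposition: it reappears as \cref{eq:two_scale_norm_equiv_proof_sa_bound} in the proof of the norm equivalence \cref{thm:norm_equiv}, where a bound on $\snorm{u_H}$ in terms of the first component of $\Anorm{}$ alone is genuinely needed --- your identity $u_H = \QQkm(u_H)$, which requires both components to vanish exactly, would not suffice there. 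Both arguments are valid proofs of the stated proposition.
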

\begin{proof}

    Since $\uts \mapsto u_H - \sumT \uft$ and 
    $\uts \mapsto \QQktm(u_H) - \uft$ are linear, the pull-back
    norms $\anorm{u_H - \sumT \uft}$ and $\anorm{\QQktm(u_H) - \uft}$
    are semi-norms on $\Vts$. Hence, $\Anorm{}$ is a semi-norm on $\Vts$
    as well.

    Further, we have $\snorm{u_H} = \snorm{\IH(u_H - \sumT \hS{-2} \uft)}
    \leq \CI \snorm{u_H - \sumT \hS{-2} \uft}
    \leq \CI \alpha^{-1/2}\anorm{u_H - \sumT \hS{-2} \uft}$, so
    $\Anorm{u} = 0$ implies $u_H = 0$. This in turn implies
    $\anorm{\uft} = \anorm{\QQktm(u_H) - \uft} = 0$ for all $T \in \Gridh$,
    so $\utsm = 0$. Hence, $\Anorm{}$ is indeed a norm on $\Vts$.
    The argument for $\SMnorm{}$ is similar.
\end{proof}

We are going to show that $\Anorm{}$ and $\SMnorm{}$ are equivalent norms, for which we require some technical results.

\begin{lemma} \label{lemma:norm_prop_1}
	Let $\vft\in\Vfhkt$ for each $T \in \Gridh$ be given. Then we have:
	\begin{equation*}
	\snorm{\sumT\vft}^2 \leq \CO \sumT\snorm{\vft}^2 \quad\text{and}\quad
	\anorm{\sumT\vft}^2 \leq \CO \sumT\anorm{\vft}^2.
	\end{equation*}
\end{lemma}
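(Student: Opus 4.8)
The plan is to prove both inequalities by a single pointwise finite-overlap argument, treating the $\snorm{}$ and $\anorm{}$ cases in parallel since they differ only by the insertion of the pointwise multiplier $A_\mu^{1/2}$ inside the gradient.

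First I would write the left-hand side of the first inequality as an integral over $\Omega$,
\[
\snorm*{\sumT \vft}^2 = \int_\Omega \abs*{\sumT \nabla \vft(x)}^2 \dx.
\]
The crucial observation is that, since $\vft \in \Vfhkt = \Vfh \cap H^1_0(U_k(T))$, each $\vft$ is supported in the patch $U_k(T)$, so that $\nabla \vft(x) = 0$ whenever $x \notin U_k(T)$. Fixing $x \in \Omega$, the only nonvanishing summands in $\sumT \nabla \vft(x)$ therefore correspond to those $T$ with $x \in U_k(T)$, and by the definition of $\CO$ there are at most $\CO$ of them.

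Next I would apply the elementary inequality $\abs{\sum_{i=1}^m a_i}^2 \leq m \sum_{i=1}^m \abs{a_i}^2$ (a direct consequence of the Cauchy--Schwarz inequality) pointwise, taking the $a_i$ to be the at most $\CO$ nonzero gradients $\nabla \vft(x)$. Denoting by $m(x) := \#\{T \in \Gridh \,\vert\, x \in U_k(T)\} \leq \CO$ the pointwise overlap count, this yields
\[
\abs*{\sumT \nabla \vft(x)}^2 \leq m(x) \sumT \abs{\nabla \vft(x)}^2 \leq \CO \sumT \abs{\nabla \vft(x)}^2 \qquad \text{for a.e.\ } x \in \Omega,
\]
where in the middle step the extra (vanishing) summands may be freely added back in. Integrating over $\Omega$ and interchanging the finite sum with the integral gives the first claim. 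For the second inequality I would repeat the identical computation with $\nabla \vft$ replaced by $A_\mu^{1/2}\nabla \vft$ throughout; the support property is unaffected, since $A_\mu^{1/2}$ acts as a pointwise multiplier, so the same overlap bound produces the $\anorm{}$ estimate.

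Since the argument is essentially bookkeeping, I do not expect a serious obstacle; the only point requiring a little care is to justify that the pointwise Cauchy--Schwarz factor can be replaced by the uniform constant $\CO$ rather than the pointwise count $m(x)$, which is immediate from $m(x) \leq \CO$ together with the fact that reinstating the vanishing terms into $\sumT \abs{\nabla \vft(x)}^2$ does not change its value.
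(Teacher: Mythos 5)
Your proof is correct and follows essentially the same route as the paper, which applies Jensen's inequality (your elementary Cauchy--Schwarz bound $\abs{\sum_{i=1}^m a_i}^2 \leq m \sum_{i=1}^m \abs{a_i}^2$) pointwise and integrates. The only difference is that you spell out explicitly the support argument that makes the constant $\CO$ rather than $\abs{\Gridh}$ appear, a step the paper leaves implicit.
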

\begin{proof}
	Using Jensen's inequality, we have:
	\begin{equation*}
	\begin{aligned}
	\anorm{\sumT\vft}^2 &= \int_\Omega \abs{\sumT A_\mu^{1/2}(x)\nabla\vft(x)}^2\dx \\
	&\leq \int_\Omega \CO\cdot \sumT\abs{A_\mu^{1/2}(x)\nabla\vft(x)}^2\dx
	= \CO \sumT\anorm{\vft}^2.
	\end{aligned}
	\end{equation*}
	The proof for $\snorm{}$ is the same.
\end{proof}

\begin{lemma}\label{thm:qqktm_bound}
    For arbitrary $u_H \in V_H$ we have
     \begin{equation*}
         \left(\sumT \anorm{\QQktm (u_H)}^2\right)^{1/2} \leq \anorm{u_H}.
     \end{equation*}
\end{lemma}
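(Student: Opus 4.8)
The plan is to exploit that each localized corrector solves the local problem \eqref{eq:loc_cor_problems} whose right-hand side is the \emph{element}-restricted form $a_\mu^T(u_H,\cdot)$, and to test this defining equation with the corrector itself. Recalling that $\anorm{v}^2 = a_\mu(v,v)$ and that $\QQktm(u_H)\in\Vfhkt$ is an admissible test function, \eqref{eq:loc_cor_problems} gives
\[
\anorm{\QQktm(u_H)}^2 = a_\mu(\QQktm(u_H),\QQktm(u_H)) = a_\mu^T(u_H, \QQktm(u_H)).
\]

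Next I would estimate the right-hand side. Since $A_\mu$ is symmetric and uniformly elliptic by \eqref{eq:ellipticity}, the element-restricted form $a_\mu^T$ is positive semi-definite, so the Cauchy--Schwarz inequality on the single element $T$ yields
\[
a_\mu^T(u_H, \QQktm(u_H)) \le a_\mu^T(u_H,u_H)^{1/2}\, a_\mu^T(\QQktm(u_H),\QQktm(u_H))^{1/2}.
\]
As the integrand $A_\mu\nabla\QQktm(u_H)\cdot\nabla\QQktm(u_H)$ is nonnegative and $T\subseteq\Omega$, restricting its integral to $T$ can only decrease it, so $a_\mu^T(\QQktm(u_H),\QQktm(u_H))^{1/2} \le \anorm{\QQktm(u_H)}$. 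Combining the two displays and cancelling one factor of $\anorm{\QQktm(u_H)}$ (the case $\QQktm(u_H)=0$ being trivial) gives the per-element bound $\anorm{\QQktm(u_H)} \le a_\mu^T(u_H,u_H)^{1/2}$.

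Finally, I would square this estimate and sum over $T\in\Gridh$. Because $\Gridh$ is a partition of $\Omega$, the element energies add up to the global energy, so
\[
\sumT \anorm{\QQktm(u_H)}^2 \le \sumT a_\mu^T(u_H,u_H) = a_\mu(u_H,u_H) = \anorm{u_H}^2,
\]
and taking square roots yields the claim.

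I do not expect a genuine obstacle; the one point to handle with care is that the overlapping patches $U_k(T)$ must not introduce the overlap constant $\CO$. This is avoided precisely because the full $\Omega$-energy of each corrector is bounded by the energy of $u_H$ over the single element $T$ alone, and these element contributions are summed over the \emph{pairwise disjoint} elements of $\Gridh$ rather than over the overlapping patches; keeping the element-restricted form $a_\mu^T$ on the right-hand side throughout is what makes the estimate sharp and free of $\CO$.
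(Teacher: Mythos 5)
Your proof is correct and follows essentially the same route as the paper's: test the corrector equation \eqref{eq:loc_cor_problems} with $\QQktm(u_H)$ itself, apply Cauchy--Schwarz in the $A_\mu$-weighted inner product, and sum over the pairwise disjoint elements of $\Gridh$. The only difference is organizational: you cancel a factor per element to obtain the local bound $\anorm{\QQktm(u_H)} \le a_\mu^T(u_H,u_H)^{1/2}$ and then sum the squares, whereas the paper sums first, applies a discrete Cauchy--Schwarz across the elements, and divides by the global factor $\bigl(\sumT \anorm{\QQktm(u_H)}^2\bigr)^{1/2}$ at the end.
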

\begin{proof}
By definition of $\QQktm$ we have
    \begin{equation*}
    \begin{split}
    \sumT &\anorm{\QQktm (u_H)}^2 
    = \sumT a_\mu(\QQktm (u_H), \QQktm (u_H)) 
    = \sumT a_\mu^T(u_H, \QQktm (u_H)) \\
    &\!\!\!\!\!\!\!\leq \sumT \biggl(\int_T\abs{A_\mu^{1/2}(x)\nabla u_H(x)}^2\dx\biggr)^{1/2}
    \cdot \biggl(\int_T\abs{A_\mu^{1/2}(x)\nabla\QQktm(u_H)(x)}^2\dx\biggr)^{1/2} \\
    &\!\!\!\!\!\!\!\leq \biggl(\sumT \int_T\abs{A_\mu^{1/2}(x)\nabla u_H(x)}^2\dx\biggr)^{1/2}
    \cdot \biggl(\sumT \int_T\abs{A_\mu^{1/2}(x)\nabla\QQktm(u_H)(x)}^2\dx\biggr)^{1/2} \\
    &\!\!\!\!\!\!\!\leq \anorm{u_H}^2 \cdot \biggl( \sumT \anorm{\QQktm u_H}^2 \biggr)^{1/2}.
    \end{split}
    \end{equation*}
    Dividing by the second factor yields the claim.
\end{proof}

\noindent Now, we are prepared to show the equivalence of both norms.
\begin{proposition}\label{thm:norm_equiv}
    $\Anorm{}$ and $\SMnorm{}$ are equivalent norms on $\Vts$
    with the following bounds for every $\uts \in \Vts$:
    \begin{equation*}
        \CI^{-1}\alpha^{1/2}\SMnorm{\uts}
        \leq \Anorm{\uts}
        \leq \sqrt{3}(1+\CO)^{1/2}\beta^{1/2}\SMnorm{\uts}.
        \label{eq:SMnormAnorm}
    \end{equation*}
\end{proposition}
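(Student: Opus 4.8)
The plan is to establish the two inequalities separately, reducing everything to the elementary bounds $\alpha\snorm{v}^2 \le \anorm{v}^2 \le \beta\snorm{v}^2$ coming from ellipticity~\eqref{eq:ellipticity} together with \cref{lemma:norm_prop_1,thm:qqktm_bound}. Throughout I abbreviate $w := u_H - \sumT \uft$ and $e_T := \QQktm(u_H) - \uft$, so that $\Anorm{\uts}^2 = \anorm{w}^2 + \rho\sumT\anorm{e_T}^2$ and $\SMnorm{\uts}^2 = \snorm{u_H}^2 + \rho\sumT\snorm{e_T}^2$. I also record the identity $w = u_H - \QQkm(u_H) + \sumT e_T$, which follows from $\sumT\uft = \QQkm(u_H) - \sumT e_T$ and the definition~\eqref{eq:loc_corrector} of $\QQkm$.

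For the lower bound I would reuse the interpolation argument from the previous proof: since each $\uft \in \ker(\IH)$ and $\IH$ is idempotent on $V_H$, we get $\IH(w) = u_H$ and hence $\snorm{u_H} \le \CI\snorm{w} \le \CI\alpha^{-1/2}\anorm{w}$, i.e.\ $\CI^{-2}\alpha\snorm{u_H}^2 \le \anorm{w}^2$. For the corrector part I would note that idempotency forces $\CI \ge 1$, so ellipticity gives $\CI^{-2}\alpha\snorm{e_T}^2 \le \anorm{e_T}^2$ term by term; summing over $T$ with weight $\rho$ and adding the $u_H$-estimate yields $\CI^{-2}\alpha\,\SMnorm{\uts}^2 \le \Anorm{\uts}^2$, which is the claimed lower bound.

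The only genuine work is the upper bound on $\anorm{w}^2$, and here I expect the main obstacle: the term $\QQkm(u_H) = \sumT\QQktm(u_H)$ has global support and cannot be bounded naively, so its energy must be controlled through the finite-overlap structure of the patches. I would apply $\anorm{a+b+c}^2 \le 3(\anorm{a}^2+\anorm{b}^2+\anorm{c}^2)$ to the three summands $u_H$, $-\QQkm(u_H)$, $\sumT e_T$ of $w$, then use \cref{lemma:norm_prop_1} to extract the overlap constant $\CO$ from the two sums and \cref{thm:qqktm_bound} to bound $\sumT\anorm{\QQktm(u_H)}^2 \le \anorm{u_H}^2$. Together with $\anorm{\cdot}^2 \le \beta\snorm{\cdot}^2$ this gives $\anorm{w}^2 \le 3\beta\bigl((1+\CO)\snorm{u_H}^2 + \CO\sumT\snorm{e_T}^2\bigr)$.

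Finally I would add the leftover term $\rho\sumT\anorm{e_T}^2 \le \beta\rho\sumT\snorm{e_T}^2$ and use $\rho \ge 1$ to absorb the unweighted corrector contributions into the weighted ones, checking that the constants collapse exactly: the $\snorm{u_H}^2$ coefficient is already $3(1+\CO)\beta$, while the corrector coefficient is $3\CO + 1 \le 3(1+\CO)$. Collecting both terms gives $\Anorm{\uts}^2 \le 3(1+\CO)\beta\,\SMnorm{\uts}^2$, i.e.\ the stated upper bound after taking square roots.
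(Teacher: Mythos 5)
Your proof is correct and takes essentially the same route as the paper's: the lower bound via the interpolation estimate $\snorm{u_H} \leq \CI\snorm{u_H - \sumT\uft}$ together with $\CI \geq 1$, and the upper bound via the three-term splitting $u_H - \QQkm(u_H) + \sumT(\QQktm(u_H) - \uft)$ combined with \cref{lemma:norm_prop_1} and \cref{thm:qqktm_bound}. The only (immaterial) difference is that you invoke $\rho \geq 1$ at the very end to absorb the unweighted corrector sum, whereas the paper inserts the factor $\rho$ one step earlier; the constants collapse identically.
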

\begin{proof}
    Let $\uts = (u_H, \uft[T_1], \ldots, \uft[\Tlast])$. To bound $\SMnorm{\uts}$
    by $\Anorm{\uts}$, note that
    \begin{equation}\label{eq:two_scale_norm_equiv_proof_sa_bound}
        \snorm{u_H}^2 \leq \CI^2 \snorm{u_H - \sumT\uft}^2
        \leq \alpha^{-1}\CI^2\anorm{u_H - \sumT\uft}^2,
    \end{equation}
    and, using $\CI \geq 1$,
    \begin{equation*}
        \rho\sumT\snorm{\QQktm(u_H) - \uft}^2 \leq
        \alpha^{-1}\CI^2\rho\sumT\anorm{\QQktm(u_H) - \uft}^2.
    \end{equation*}
    To bound $\Anorm{\uts}$ by $\SMnorm{\uts}$ we use \cref{lemma:norm_prop_1}, \cref{thm:qqktm_bound} and
    $\rho \geq 1$ to obtain
    \begin{align*}
        &\anorm{u_H - \sumT\uft}^2\\
        &\qquad\leq 3\anorm{u_H}^2 + 3\CO\sumT\anorm{\QQktm(u_H) - \uft}^2 + 3\CO\sumT\anorm{\QQktm(u_H)}^2 \\
        &\qquad\leq 3(1 + \CO)\anorm{u_H}^2 + 3\CO\rho\sumT\anorm{\QQktm(u_H) - \uft}^2\\
        &\qquad\leq 3(1 + \CO)\beta\snorm{u_H}^2 + 3\CO\beta\rho\sumT\snorm{\QQktm(u_H) - \uft}^2.
    \end{align*}
    Adding
    \begin{equation*}
        \rho\sumT\anorm{\QQktm(u_H) - \uft}^2 \leq
        \beta\rho\sumT\snorm{\QQktm(u_H) - \uft}^2
    \end{equation*}
    to both sides yields the claim.
\end{proof}

Finally, we show that $\Btsm$ is $\Anorm{}$-$\Snorm{}$ inf-sup stable with controllable constants.
\begin{proposition}\label{thm:two_scale_inf_sup}
    Let
    \begin{equation*}
        \rho := \CO\cdot\kappa,
    \end{equation*}
    then $\Btsm$ is $\Anorm{}$-$\Snorm{}$-continuous and inf-sup stable
    with the following bounds on the respective constants:
    \begin{equation*}
        \sup_{0 \neq \uts \in \Vts}
        \sup_{0 \neq \vts \in \Vts}
        \frac{\Btsm(\uts,\vts)}{\Anorm{u}\cdot\Snorm{v}} 
        \leq \beta^{1/2} \quad \text{and}\quad
        \inf_{0 \neq \uts \in \Vts}
        \sup_{0 \neq \vts \in \Vts}
        \frac{\Btsm(\uts,\vts)}{\Anorm{u}\cdot\Snorm{v}} 
        \geq \CPG/\sqrt{5}.
    \end{equation*}
\end{proposition}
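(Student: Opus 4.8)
The plan is to first rewrite $\Btsm$ in a form that exposes its relationship to the two norms $\Anorm{\cdot}$ and $\Snorm{\cdot}$. Using the defining property of the localized correctors, $a_\mu^T(u_H, \vft) = a_\mu(\QQktm(u_H), \vft)$ for every $\vft \in \Vfhkt$, and the fact that the $\rho^{1/2}$ prefactor multiplies both summands, the two-scale form collapses to
\[
\Btsm(\uts, \vts) = a_\mu\Bigl(u_H - \sumT \uft,\, v_H\Bigr) - \rho^{1/2}\sumT a_\mu\bigl(\QQktm(u_H) - \uft,\, \vft\bigr).
\]
Writing $w := u_H - \sumT\uft$ and $e_T := \QQktm(u_H) - \uft$, the ``macroscopic'' contribution $a_\mu(w,v_H)$ then pairs naturally against the first summand $\anorm{w}^2$ of $\Anorm{\uts}^2$, while the ``microscopic'' contribution pairs against the terms $\rho\,\anorm{e_T}^2$.

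Continuity is then immediate. I would bound each term by the Cauchy--Schwarz inequality for $a_\mu$, group the factor $\rho^{1/2}\anorm{e_T}$ with $\anorm{\vft}$, and apply a discrete Cauchy--Schwarz over the index set $\{H\} \cup \Gridh$. This reproduces exactly $\Anorm{\uts}$ on the trial side and $(\anorm{v_H}^2 + \sumT\anorm{\vft}^2)^{1/2}$ on the test side; converting the latter energy norms via the pointwise bound $\anorm{\cdot} \le \beta^{1/2}\snorm{\cdot}$ gives precisely the continuity constant $\beta^{1/2}$ with no further losses.

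For the inf-sup bound I would, for fixed $\uts$, build the test function $\vts = (v_H, \vft[T_1], \ldots)$ from two ingredients. The fine components I set to a multiple of the corrector error, $\vft := \uft - \QQktm(u_H) = -e_T$; by the simplified form this turns the microscopic contribution into $+\rho^{1/2}\sumT\anorm{e_T}^2$, recovering the fine part of $\Anorm{\uts}$. For $v_H$ I take a suitably scaled PG--LOD supremizer of the multiscale function $u_H - \QQkm(u_H)$, so that the inf-sup stability encoded in $\CPG$ yields $a_\mu(u_H - \QQkm(u_H), v_H) \ge \CPG\,\anorm{u_H - \QQkm(u_H)}\,\snorm{v_H}$, recovering the macroscopic part. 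The splittings $a_\mu(w, v_H) = a_\mu(u_H - \QQkm(u_H), v_H) + a_\mu(\sumT e_T, v_H)$ and $\anorm{w} \le \anorm{u_H - \QQkm(u_H)} + \anorm{\sumT e_T}$ isolate two coupling terms, both estimated through \cref{lemma:norm_prop_1} in the form $\anorm{\sumT e_T} \le \CO^{1/2}\bigl(\sumT\anorm{e_T}^2\bigr)^{1/2}$ together with $\anorm{v_H} \le \beta^{1/2}\snorm{v_H}$.

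The crux, and the step I expect to be the main obstacle, is balancing these coupling terms against the recovered diagonal contributions with relative weights sharp enough to produce the stated constant. Here the choice $\rho = \CO\kappa$ is decisive: it rewrites $\CO^{1/2}\bigl(\sumT\anorm{e_T}^2\bigr)^{1/2}$ as $\kappa^{-1/2}\bigl(\rho\sumT\anorm{e_T}^2\bigr)^{1/2}$, that is, as a contrast-independent fraction of the fine part of $\Anorm{\uts}$, so that a Young inequality absorbs the cross terms uniformly in $\kappa$ and $\CO$. One then still has to choose the relative weight between the coarse and fine test components (including a $\rho^{1/2}$-rescaling of $\vft$ to reconcile the powers of $\rho$ appearing in $\Btsm$ and in $\Anorm{\cdot}$), control $\Snorm{\vts}$ by $\Anorm{\uts}$ via $\snorm{e_T} \le \alpha^{-1/2}\anorm{e_T}$, and optimize the resulting two-parameter quadratic lower bound; carrying this optimization through is what pins the inf-sup constant down to $\CPG/\sqrt5$.
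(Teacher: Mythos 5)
Your collapsed form of $\Btsm$ (via $a_\mu^T(u_H,\vft)=a_\mu(\QQktm(u_H),\vft)$) and your continuity argument — termwise Cauchy--Schwarz, discrete Cauchy--Schwarz over $\{H\}\cup\Gridh$, then $\anorm{\cdot}\le\beta^{1/2}\snorm{\cdot}$ on the test side — are exactly the paper's proof of the bound $\beta^{1/2}$, so that half is correct.

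The inf-sup half has a genuine gap, and it sits precisely in the optimization you deferred. Write $m:=u_H-\QQkm(u_H)$, $e_T:=\QQktm(u_H)-\uft$, $w:=u_H-\sumT\uft$, $M:=\anorm{m}$, $E:=(\sumT\anorm{e_T}^2)^{1/2}$. Your test functions span the two-dimensional, $\Snorm{}$-orthogonal space generated by $x_1=(v_H,0,\dots,0)$ (with $v_H$ the supremizer of $m$) and $x_2=(0,-e_{T_1},\dots,-e_{\Tlast})$, so optimizing the two weights yields exactly $(a'^2+b'^2)^{1/2}$ with $a'=\Btsm(\uts,x_1)/\Snorm{x_1}$, $b'=\Btsm(\uts,x_2)/\Snorm{x_2}$, and your estimates give $b'\ge\alpha^{1/2}\rho^{1/2}E$ and $a'\ge \CPG M-\beta^{1/2}\CO^{1/2}E=\CPG M-\alpha^{1/2}\rho^{1/2}E$ for $\rho=\CO\kappa$. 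Two problems arise. First, the lower bound on $a'$ can be negative, in which case it gives no control of $a'^2$; a case distinction is needed, about which your sketch is silent. Second, and decisively, even with that fix these ingredients cannot reach $\CPG/\sqrt5$: take $\kappa=1$ and $\CPG=\alpha^{1/2}$ (admissible, since the paper only assumes $\CPG\le\alpha^{1/2}$), and consider the regime $M=\sqrt2\,\rho^{1/2}E$ in which the triangle and overlap inequalities are sharp, i.e.\ $\anorm{w}=M+\CO^{1/2}E$ — nothing in your estimates excludes this. Then your lower bound for the supremum is $\alpha^{1/2}\rho^{1/2}E\,\bigl((\sqrt2-1)^2+1\bigr)^{1/2}$, while $\Anorm{\uts}=\bigl((\sqrt2+1)^2+1\bigr)^{1/2}\rho^{1/2}E$, and the quotient equals $(\sqrt2-1)\,\CPG\approx 0.414\,\CPG<\CPG/\sqrt5\approx 0.447\,\CPG$. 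Minimizing over all regimes, the best constant your chain can prove is $(\sqrt2-1)\CPG$; you do obtain inf-sup stability, but not the constant asserted in the proposition, so the final claim of your proposal is false.

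The idea you are missing is that the paper never evaluates the coarse residual at a fixed test function. It keeps the full dual norms $A:=\sup_{0\neq v_H\in V_H}a_\mu(w,v_H)/\snorm{v_H}$ and $B:=\sup$ of the residual over the fine block, and proves the two \emph{upper} bounds $\anorm{w}\le\CPG^{-1}A+(\CPG^{-1}\beta^{1/2}+1)\CO^{1/2}E$ (triangle inequality, PG--LOD inf-sup applied to $m$, overlap lemma) and $E\le\alpha^{-1/2}\rho^{-1/2}B$ (by testing with $-e_T$, exactly your fine-scale choice). These are inequalities between nonnegative quantities, so they may be squared without any sign discussion, and with $\rho=\CO\kappa$, $\CPG\le\alpha^{1/2}$, $\kappa\ge1$ they combine to a bound of the form $\Anorm{\uts}^2\le 5\,\CPG^{-2}(A^2+B^2)$; the proof then closes with the exact identity $\sup_{\vts}\Btsm(\uts,\vts)/\Snorm{\vts}=(A^2+B^2)^{1/2}$, valid because the squared dual norm of a functional on the direct sum $\Vts$ is the sum of the squared dual norms over the blocks. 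Because $A$ stays a supremum, the coupling term $a_\mu(\sumT e_T,v_H)$ is absorbed into an upper bound for $\anorm{m}$ rather than subtracted from a lower bound at one particular $v_H$, which is exactly where your construction loses. Equivalently: replace your supremizer of $m$ by the $\snorm{}$-Riesz representative of $v_H\mapsto a_\mu(w,v_H)$, so that $a'=A$, and use $\CPG$ only to bound $\anorm{m}$ from above by $\CPG^{-1}A+\CPG^{-1}\beta^{1/2}\CO^{1/2}E$; then your two-block optimization becomes the paper's argument.
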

\begin{proof}
    We first bound the continuity constant of $\Btsm$. To this end, let
    $\uts = (u_H, \uft[T_1], \ldots, \uft[\Tlast]) \in \Vts$ and 
    $\vts = (v_H, \vft[T_1], \ldots, \vft[\Tlast]) \in \Vts$ be arbitrary.
    Then we have
    \begin{equation*}
        \begin{aligned}
            \Btsm(\uts, \vts)
            & = a_\mu(u_H - \sumT\uft, v_H)
            - \rho^{1/2}\sumT\left(a_\mu(\uft,\vft) - a_\mu^T(u_H,\vft)\right)\\
            & \leq \anorm{u_H-\sumT\uft}\anorm{v_H}
            + \sumT\rho^{1/2}\anorm{\uft-\QQktm(u_H)}\anorm{\vft}\\
            & \leq \left[\anorm{u_H - \sumT\uft}^2
            + \sumT\rho\anorm{\uft-\QQktm(u_H)}^2 \right]^{1/2}\\
            & \qquad\qquad\cdot\left[\anorm{v_H}^2 + \sumT\anorm{\vft}^2\right]^{1/2}\\
            & \leq \Anorm{\uts} \cdot \beta^{1/2} \cdot \Snorm{\vts}.
        \end{aligned}
    \end{equation*}
    To prove inf-sup stability, first note that
        \begin{align*}
            \anorm{u_H-\sumT\uft}\hspace{-6em}& \\
            & \leq \anorm{u_H-\sumT\QQktm(u_H)}
            + \anorm{\sumT(\QQktm(u_H) - \uft)} \\
            & \leq \CPG^{-1} \sup_{0\neq v_H\in V_H} \frac{a_\mu(u_H-\sumT\QQktm(u_H),v_H)}{\snorm{v_H}}
            + \anorm{\sumT(\QQktm(u_H) - \uft)} \\
            & \leq \CPG^{-1}\underbrace{\sup_{0\neq v_H\in V_H}\hspace{-2pt}
            \frac{\Btsm(\uts,(v_H,0,\ldots,0))}{\Snorm{(v_H,0,\ldots,0)}}}_{A}
            \\&\quad+ \CPG^{-1}\hspace{-2pt}\sup_{0\neq v_h\in V_H}\hspace{-2pt}
            \frac{a_\mu(\sumT(\QQktm(u_H)-\uft),v_H)}{\snorm{v_H}} + \anorm{\sumT\hspace{-2pt}(\QQktm(u_H) - \uft)} \\
            & \leq \CPG^{-1}A
            + \left(\CPG^{-1}
            \sup_{0\neq v_H\in V_H}\frac{\anorm{v_H}}{\snorm{v_H}}
            + 1\right) \anorm{\sumT(\QQktm(u_H) - \uft)} \\
            & \leq \CPG^{-1}A 
            + (\CPG^{-1}\beta^{1/2} + 1)\CO^{1/2}\left(\sumT\anorm{\QQktm(u_H) - \uft}^2\right)^{1/2},
        \end{align*}
    hence:
    \begin{equation*}
        \begin{aligned}
            \Anorm{\uts}^2
            & = \anorm{u_H-\sumT\uft}^2 + \rho\sumT\anorm{\QQktm(u_H)-\uft}^2\\
            & \leq 2\CPG^{-2}A^2
            + (2\CPG^{-2}\beta\CO +2\CO + \rho)\sumT\anorm{\QQktm(u_H) - \uft}^2.
        \end{aligned}
    \end{equation*} 
    Further,
    \begin{equation*}
        \begin{aligned}
            \left(\sumT\anorm{\uft - \QQktm(u_H)}^2\right)^{1/2}
            &= \frac{\sumT a_\mu(\uft-\QQktm(u_H), \uft-\QQktm(u_H))}{(\sumT\anorm{\uft-\QQktm(u_H)}^2)^{1/2}}\\
            & \leq \sup_{\vft\in\Vfhkt}\frac{\sumT a_\mu(\uft-\QQktm(u_H), \vft)}{(\sumT\anorm{\vft}^2)^{1/2}}\\
            & \leq \alpha^{-1/2}\sup_{\vft\in\Vfhkt}\frac{\sumT a_\mu(\uft, \vft) - a_\mu^T(u_H, \vft)}{(\sumT\snorm{\vft}^2)^{1/2}}\\
            & = \alpha^{-1/2}\rho^{-1/2}\underbrace{\sup_{\vft\in\Vfhkt}
            \frac{\Btsm(\uts, (0,\vft[T1],\ldots,\vft[\Tlast]))}
            {\Snorm{(0,\vft[T1],\ldots,\vft[\Tlast])}}}_{B}.
        \end{aligned}
    \end{equation*}
    Combining both estimates yields
    \begin{equation*}
        \begin{aligned}
            \Anorm{\uts}^2
            &\leq 2\CPG^{-2}A^2 + (2\CPG^{-2}\alpha^{-1}\beta\CO\rho^{-1} +
            2\alpha^{-1}\CO\rho^{-1}+\alpha^{-1})B^2\\
            &= 2\CPG^{-2}A^2 + \underbrace{(2\CPG^{-2} +
            2\alpha^{-1}\kappa^{-1}+\alpha^{-1})}_{\leq 5\cdot\CPG^{-2}}B^2.\\
            &\leq 5\cdot\CPG^{-2}(A^2 + B^2) \\
            &=5\cdot\CPG^{-2}\left(\sup_{\vts\in\Vts}\frac{\Btsm(\uts,\vts)}{\Snorm{\vts}}\right)^2,
        \end{aligned}
    \end{equation*}
    where we have used $\CPG \leq \alpha^{1/2}$ and $\kappa \geq 1$ in the third equality.
    In the last equality we have used the fact that the square norm of a linear functional on a direct sum
    of Hilbert spaces is the sum of the square norms of the functional restricted to the respective subspaces.
\end{proof}

\subsection{Error Bounds}
\label{sec:a-posteriori_estimators}
Exploiting the inf-sup stability of $\Btsm$, we now easily obtain error bounds w.r.t.\ the PG-LOD solution.
We start with an a posteriori bound and define for 
arbitrary $\uts \in \Vts$ the residual-based error indicators
\begin{align}\label{eq:stage2_est_a}
        \eta_{a,\mu}(\uts) &:= 
        \sqrt{5}\CPG^{-1}
        \sup_{v\in\Vts}
        \frac{\Fts(\vts) - \Btsm(\uts,\vts)}
        {\Snorm{\vts}},\\\label{eq:stage2_est_s}
        \eta_{1,\mu}(\uts) &:= 
        \sqrt{5}\CI\alpha^{-1/2}\CPG^{-1}
        \sup_{v\in\Vts}
        \frac{\Fts(\vts) - \Btsm(\uts,\vts)}
        {\Snorm{\vts}}.
\end{align}
These error indicators provide strict and efficient upper bounds on the error between $\uts$ and the 
two-scale PG-LOD solution:
\begin{theorem}[A posteriori bound]\label{thm:a_posteriori}
    Let $\uts= (u_H, \uft[T_1], \ldots, \uft[\Tlast])\in\Vts$ be an arbitrary two-scale function, denote by $\utsm$ the
    solution of the two-scale solution as in \cref{eq:two_scale_solution} for a
    given parameter $\mu$, and let $\rho$ be given as in \cref{thm:two_scale_inf_sup}.
    Then the following energy error bounds hold:
    \begin{equation}\label{eq:a_posteriori_anorm}
        \Anorm{\utsm - \uts}
        \leq \eta_{a,\mu}(\uts)
        \leq \sqrt{5}\CPG^{-1}\beta^{1/2} \Anorm{\utsm - \uts}.
    \end{equation}
    Further, we have 
    \begin{equation}\label{eq:a_posteriori_snorm}
        \left(\snorm{\uhkm - u_H}^2 + \rho\sumT\snorm{\QQktm(u_H) - \uft}^2\right)^{1/2}
        \leq \eta_{1,\mu}(\uts),
    \end{equation}
    and
    \begin{equation}\label{eq:a_posteriori_snorm_eff}
    \begin{split}
            \eta_{1,\mu}(\uts) &\leq \sqrt{15}\CI(\CO+1)^{1/2}\kappa^{1/2}\CPG^{-1}\beta^{1/2}\\&\qquad\qquad\cdot
    \left(\snorm{\uhkm - u_H}^2 + \rho\sumT\snorm{\QQktm(u_H) - \uft}^2\right)^{1/2}.
    \end{split}
    \end{equation}
\end{theorem}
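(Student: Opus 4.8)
The plan is to reduce everything to the inf-sup stability and continuity of $\Btsm$ from \cref{thm:two_scale_inf_sup} together with the norm equivalence of \cref{thm:norm_equiv}. The starting point is Galerkin orthogonality: since $\utsm$ solves \eqref{eq:two_scale_PGLOD}, the residual satisfies $\Fts(\vts) - \Btsm(\uts, \vts) = \Btsm(\utsm - \uts, \vts)$ for every $\vts \in \Vts$. Hence the supremum appearing in both $\eta_{a,\mu}$ and $\eta_{1,\mu}$ is exactly $\sup_{\vts}\Btsm(\utsm - \uts, \vts)/\Snorm{\vts}$, i.e.\ the dual norm of the linear functional $\Btsm(\utsm - \uts, \cdot)$ with respect to $\Snorm{}$.

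For \eqref{eq:a_posteriori_anorm} I would apply the two bounds of \cref{thm:two_scale_inf_sup} to the argument $\utsm - \uts$. The inf-sup bound gives $(\CPG/\sqrt{5})\Anorm{\utsm - \uts} \leq \sup_{\vts}\Btsm(\utsm - \uts, \vts)/\Snorm{\vts}$, which after multiplication by $\sqrt{5}\CPG^{-1}$ yields the lower estimate $\Anorm{\utsm - \uts} \leq \eta_{a,\mu}(\uts)$; the continuity bound gives $\sup_{\vts}\Btsm(\utsm - \uts, \vts)/\Snorm{\vts} \leq \beta^{1/2}\Anorm{\utsm - \uts}$, which after the same scaling produces the upper estimate. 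This is the standard residual--error equivalence for inf-sup stable problems and presents no real difficulty.

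The crux of the remaining two inequalities is to recognise that the quantity under the square root in \eqref{eq:a_posteriori_snorm} and \eqref{eq:a_posteriori_snorm_eff} is precisely $\SMnorm{\utsm - \uts}^2$. Writing $\utsm - \uts$ componentwise, its coarse component is $\uhkm - u_H$ and its $T$-component is $\QQktm(\uhkm) - \uft$, so the relevant fine-scale summand in the definition of $\SMnorm{}$ is $\snorm{\QQktm(\uhkm - u_H) - (\QQktm(\uhkm) - \uft)}$. Exploiting the linearity of each corrector $\QQktm$, the term $\QQktm(\uhkm)$ cancels and this equals $\snorm{\QQktm(u_H) - \uft}$, whence $\SMnorm{\utsm - \uts}^2 = \snorm{\uhkm - u_H}^2 + \rho\sumT\snorm{\QQktm(u_H) - \uft}^2$. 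This algebraic identity is the one genuinely non-mechanical step; everything else is bookkeeping.

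With this identification in hand, both statements follow from \cref{thm:norm_equiv} together with the relation $\eta_{1,\mu}(\uts) = \CI\alpha^{-1/2}\eta_{a,\mu}(\uts)$, which is immediate from \eqref{eq:stage2_est_a}--\eqref{eq:stage2_est_s}. For \eqref{eq:a_posteriori_snorm} I would chain $\SMnorm{\utsm - \uts} \leq \CI\alpha^{-1/2}\Anorm{\utsm - \uts} \leq \CI\alpha^{-1/2}\eta_{a,\mu}(\uts) = \eta_{1,\mu}(\uts)$, using the lower norm-equivalence bound and \eqref{eq:a_posteriori_anorm}. For the efficiency bound \eqref{eq:a_posteriori_snorm_eff} I would combine $\eta_{1,\mu}(\uts) \leq \CI\alpha^{-1/2}\sqrt{5}\CPG^{-1}\beta^{1/2}\Anorm{\utsm - \uts}$, coming from the upper estimate in \eqref{eq:a_posteriori_anorm}, with the upper norm-equivalence bound $\Anorm{\utsm - \uts} \leq \sqrt{3}(1+\CO)^{1/2}\beta^{1/2}\SMnorm{\utsm - \uts}$; collecting the constants and using $\alpha^{-1/2}\beta = \kappa^{1/2}\beta^{1/2}$ (valid since $\kappa = \beta/\alpha$) gives exactly the factor $\sqrt{15}\CI(1+\CO)^{1/2}\kappa^{1/2}\CPG^{-1}\beta^{1/2}$.
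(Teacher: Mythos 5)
Your proposal is correct and follows essentially the same route as the paper's proof: Galerkin orthogonality to identify the residual with $\Btsm(\utsm-\uts,\cdot)$, the inf-sup and continuity bounds of \cref{thm:two_scale_inf_sup} for \cref{eq:a_posteriori_anorm}, and then \cref{thm:norm_equiv} combined with the cancellation identity $\snorm{\QQktm(\uhkm-u_H)-(\QQktm(\uhkm)-\uft)}=\snorm{\QQktm(u_H)-\uft}$ for \cref{eq:a_posteriori_snorm,eq:a_posteriori_snorm_eff}. You have merely spelled out the constant-chasing (including $\alpha^{-1/2}\beta=\kappa^{1/2}\beta^{1/2}$) that the paper leaves implicit, and the resulting constants match exactly.
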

\begin{proof}
    Since $\utsm$ is a solution of \cref{eq:two_scale_PGLOD} we have
    \begin{equation*}
        \Btsm(\utsm - \uts,\vts) =
        \Fts(\vts) - \Btsm(\uts, \vts).
    \end{equation*}
    Hence, \cref{eq:a_posteriori_anorm} directly follows from \cref{thm:two_scale_inf_sup}.
    \Cref{eq:a_posteriori_snorm,eq:a_posteriori_snorm_eff} follow from \cref{eq:a_posteriori_anorm}
    using \cref{thm:norm_equiv} and noting that for each $T \in \Gridh$ we have 
    \begin{equation*}
        \snorm{\QQktm(\uhkm-u_H) - (\QQktm(\uhkm)-\uft)}=\snorm{\QQktm(u_H)-\uft}.
    \end{equation*}~
\end{proof}
Finally, we also show a corresponding a priori result:
\begin{theorem}[A priori bound]\label{thm:a_priori}
    Let $\overline{\Vts}$ be an arbitrary linear subspace of $\Vts$ and
    let $\overline{\uts}$ be the solution of the residual-minimization problem
    \begin{equation}\label{eq:red_solution_abstract}
        \overline{\uts}_\mu:= \argmin_{\uts\in\overline{\Vts}}
        \sup_{\vts\in \Vts}\frac{\Fts(v) - \Btsm(\uts, \vts)}
        {\Snorm{\vts}},
    \end{equation}
    then we have
    \begin{equation*}
        \Anorm{\utsm - \overline{\uts}_\mu}
        \leq \sqrt{5}\CPG^{-1}\beta^{1/2}
        \min_{\overline{\vts}\in\overline{\Vts}}\Anorm{\utsm - \overline{\vts}},
    \end{equation*}
    and
    \begin{equation*}
        \begin{split}
            \left(\snorm{\uhkm - u_H}^2 + \rho\sumT\snorm{\QQktm(u_H) - \uft}^2\right)^{1/2}\hspace{-20em}&\\
            &\leq \sqrt{15}\CI(\CO+1)^{1/2}\kappa^{1/2}\CPG^{-1}\beta^{1/2} \\&\qquad\qquad\cdot
            \min_{\overline{\vts}\in\overline{\Vts}}
            \left(\snorm{\uhkm - \overline{v}_H}^2 + \rho\sumT\snorm{\QQktm(u_H) - \overline{v}_T^\text{f}}^2\right)^{1/2}.
        \end{split}
    \end{equation*}
\end{theorem}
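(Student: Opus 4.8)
The plan is to run a standard quasi-optimality (Céa-type) argument built entirely on the reliability and efficiency halves of the a posteriori bound \cref{thm:a_posteriori}. The crucial preliminary observation is that, since $\utsm$ solves \cref{eq:two_scale_PGLOD}, the residual minimized in \cref{eq:red_solution_abstract} is exactly $\Fts(\vts) - \Btsm(\uts,\vts) = \Btsm(\utsm - \uts, \vts)$, and that the two indicators $\eta_{a,\mu}$ and $\eta_{1,\mu}$ are \emph{fixed positive scalar multiples} of the single residual supremum appearing in \cref{eq:red_solution_abstract} (they differ from it only by the constants $\sqrt{5}\CPG^{-1}$ and $\sqrt{5}\CI\alpha^{-1/2}\CPG^{-1}$). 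Consequently, the residual minimizer $\overline{\uts}_\mu$ simultaneously minimizes $\eta_{a,\mu}(\cdot)$ and $\eta_{1,\mu}(\cdot)$ over $\overline{\Vts}$, and this single fact drives both estimates.

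For the first bound I would chain three steps. First, applying the reliability half of \cref{eq:a_posteriori_anorm} to $\overline{\uts}_\mu$ gives $\Anorm{\utsm - \overline{\uts}_\mu} \leq \eta_{a,\mu}(\overline{\uts}_\mu)$. Second, using that $\overline{\uts}_\mu$ minimizes $\eta_{a,\mu}$ over $\overline{\Vts}$, I replace $\eta_{a,\mu}(\overline{\uts}_\mu)$ by $\eta_{a,\mu}(\overline{\vts})$ for an arbitrary $\overline{\vts}\in\overline{\Vts}$. Third, the efficiency half of \cref{eq:a_posteriori_anorm} applied to $\overline{\vts}$ yields $\eta_{a,\mu}(\overline{\vts}) \leq \sqrt{5}\CPG^{-1}\beta^{1/2}\Anorm{\utsm - \overline{\vts}}$. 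Concatenating and taking the infimum over $\overline{\vts}\in\overline{\Vts}$ produces the claimed estimate, the constant $\sqrt{5}\CPG^{-1}\beta^{1/2}$ being inherited verbatim from the efficiency estimate.

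The second bound follows by the identical three-step chain, now using the $\SMnorm{}$-type pair \cref{eq:a_posteriori_snorm,eq:a_posteriori_snorm_eff}: reliability gives $\SMnorm{\utsm - \overline{\uts}_\mu} \leq \eta_{1,\mu}(\overline{\uts}_\mu)$, minimality of $\overline{\uts}_\mu$ for $\eta_{1,\mu}$ gives $\eta_{1,\mu}(\overline{\uts}_\mu) \leq \eta_{1,\mu}(\overline{\vts})$, and efficiency gives $\eta_{1,\mu}(\overline{\vts}) \leq \sqrt{15}\CI(\CO+1)^{1/2}\kappa^{1/2}\CPG^{-1}\beta^{1/2}\SMnorm{\utsm - \overline{\vts}}$. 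Here I would recall the identity $\SMnorm{\utsm - \uts}^2 = \snorm{\uhkm - u_H}^2 + \rho\sumT\snorm{\QQktm(u_H) - \uft}^2$ already noted in the proof of \cref{thm:a_posteriori} to rewrite both the error of $\overline{\uts}_\mu$ and the error of $\overline{\vts}$ in the stated component form, before taking the infimum over $\overline{\vts}\in\overline{\Vts}$.

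I do not expect a genuine obstacle: the entire analytic content is already packaged in \cref{thm:a_posteriori}, so this theorem is essentially a bookkeeping consequence. The only point requiring care is the preliminary observation that the functional minimized in \cref{eq:red_solution_abstract} is proportional to \emph{both} $\eta_{a,\mu}$ and $\eta_{1,\mu}$, which is what allows one and the same minimizer $\overline{\uts}_\mu$ to control both error quantities simultaneously; once this is in hand, the constants transfer unchanged and no further estimation is needed.
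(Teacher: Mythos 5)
Your proof is correct and is exactly the argument the paper intends: the paper's own proof is the one-line remark that the result ``follows directly from \cref{thm:a_posteriori} and the definition of $\overline{\uts}_\mu$'', and your reliability--minimality--efficiency chain, hinging on the observation that $\eta_{a,\mu}$ and $\eta_{1,\mu}$ are fixed constant multiples of the one residual supremum minimized in \cref{eq:red_solution_abstract}, is precisely that argument written out in full. The component-form identity $\SMnorm{\utsm - \uts}^2 = \snorm{\uhkm - u_H}^2 + \rho\sumT\snorm{\QQktm(u_H) - \uft}^2$ you invoke for the second bound is likewise the same one recorded at the end of the paper's proof of \cref{thm:a_posteriori}.
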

\begin{proof}
    This follows directly from \cref{thm:a_posteriori} and the definition
    of $\overline{\uts}_\mu$.
\end{proof}

Note that for $k$ large enough, we have $\CPG \approx \alpha^{1/2}\CI^{-1}$
such that the a priori and a posteriori bounds have
efficiencies of that scale with $\kappa^{1/2}$ in the energy norm and with
$\kappa$ in the 1-norm, which agrees with what is to be expected for
these error bounds in a standard finite element setting.

\section{Reduced Basis approach}
\label{sec:MOR} 
In this section we describe the construction of the TSRBLOD ROM in detail.
As with all RB methods, the ROM is defined by a projection of the original model
equations, in our case the two-scale formulation~\cref{eq:two_scale_PGLOD}, onto
a reduced approximation space (\cref{sec:stage_2_rom}).
Rigorous upper and lower bounds for the MOR error are given by a residual-based
a posteriori error estimator (\cref{sec:stage_2_err}).
In order to be able to quickly assemble the ROM for a new parameter $\mu$ and to
efficiently evaluate the error estimator, an offline-online decomposition of the
ROM must be performed (\cref{sec:stage_2_offon}).
As part of the offline phase, the reduced space is constructed
as the linear span of FOM solutions $\mathfrak{u}_{\mu^*}$, where the snapshot parameters $\mu^*$ are selected
via an iterative greedy search over $\Params$ (\cref{sec:stage_2_bas_gen}).

The computation of the solution snapshots $\mathfrak{u}_{\mu^*}$ via \cref{eq:two_scale_solution}, however,
requires for each new $\mu^*$ the recomputation of all corrector problems. For large
problems this may be computationally infeasible.
Thus, we combine our approach (Stage 2) with a preceeding prepartory step similar
to~\cite{RBLOD}, where each corrector problem is replaced by an efficient ROM surrogate (Stage 1).
These ROMs are then used in the offline phase of Stage 2 to compute approximate solution snapshots
$\mathfrak{u}_{\mu^*}$.
Again, Stage 1 is divided into ROM construction (\cref{sec:stage_1_rom}), error estimation
(\cref{sec:stage_1_err}), offline-online decomposition (\cref{sec:stage_1_of/on}) and
construction of the reduced spaces (\cref{sec:stage1_basisgen}).
After a Stage 1 ROM is constructed, all associated fine-mesh data can be deleted.
In particular, all computations in Stage 2 are independent from size of $\grid$.

We emphasize again that, compared to~\cite{RBLOD}, the Stage 2 TSRBLOD ROM is not only
independent from $\grid$, but also from the number of coarse elements. Its size only depends
on the number of selected basis vectors in~\cref{sec:stage_2_bas_gen}.
Further, in contrast to~\cite{RBLOD}, the derived error estimator fully takes the effect of the errors of 
the reduced corrector problems on the resulting global solution into account and, thus,
rigorously bounds the error of the ROM w.r.t.\ the LOD solution.

\subsection{Stage 1: Computing RB approximations of $\QQktm$ and $\Ktmu$}
\label{sec:stage1}
\subsubsection{Definition of the reduced order model}\label{sec:stage_1_rom}%
    Let $T \in \Gridh$ be fixed,
    and let $\Vfrbkt \subset \Vfhkt$ be an approximation space for the correctors $\QQktm(v_H)$ of $v_H$
    for arbitrary $v_H \in V_H$ and $\mu \in \mathcal{P}$.
    Then, for given $v_H \in V_H$ and $\mu \in \mathcal{P}$, we determine an approximate corrector $\QQktmrb(v_H) \in
    \Vfrbkt$ via Galerkin projection onto $\Vfrbkt$ as the solution of
    \begin{equation}\label{eq:stage1_rom}
        a_\mu(\QQktmrb(v_H), \vft) = a_\mu^T(v_H, \vft) \qquad \forall \vft \in
        \Vfrbkt.
	\end{equation}

    Note that $\QQktmrb(v_H)$ is well-defined since $a_\mu$ is a coercive bilinear form.
    Using these reduced correction operators we can define an approximate localized multiscale matrix
    $\Kmurb$ given by
    \begin{equation*}
    \Kmurb:= \sumT \Ktmurb, \quad \left(\Ktmurb \right)_{ji} := \skal{A_{\mu} (\chi_T \nabla - \nabla \QQktmrb)
    \phi_i}{\nabla \phi_j}_{U_k(T)}.
    \end{equation*}

\subsubsection{Error estimation}\label{sec:stage_1_err}
We employ standard RB tools for the a posteriori error estimation of the reduced system \eqref{eq:stage1_rom}.
In detail, we use the residual-norm based estimate:
\begin{equation}\label{eq:stage1_error_estimate}
        \anorm{\QQktm(v_H) - \QQktmrb(v_H)} 
        \leq \eta_{T,\mu}(\QQktmrb(v_H))
        \leq \kappa^{1/2}\anorm{\QQktm(v_H) - \QQktmrb(v_H)},
\end{equation}
where
\begin{equation*}
    \eta_{T,\mu}(\QQktmrb(v_H)) := \alpha^{-1/2} \sup_{\vft\in\Vfhkt}
                     \frac{a_\mu^T(v_H, \vft) - a_\mu(\QQktmrb(v_H), \vft)}{\snorm{\vft}}.
\end{equation*}
The bounds in \eqref{eq:stage1_error_estimate} easily follow from the definition
of $\QQktm(v_H)$ and the equivalence of $\anorm{}$ and $\snorm{}$.

\subsubsection{Offline-Online Decomposition}\label{sec:stage_1_of/on}
In order to compute $\QQktmrb(v_H)$ and subsequently $\Ktmurb$, let $\dimVfrbkt := \dim \Vfrbkt$, and choose a basis
$\rbt{n}$, $1 \leq n \leq \dimVfrbkt$, of $\Vfrbkt$.
Expanding $\QQktmrb(v_H)$ w.r.t.\ this basis as
\begin{equation*}
    \QQktmrb(v_H) := \sum_{n=1}^{\dimVfrbkt} c_n\cdot\rbt{n},
\end{equation*}
the coefficient vector $c \in \mathbb{R}^{\dimVfrbkt}$ is given as the solution of the $\dimVfrbkt \times
\dimVfrbkt$-dimensional linear system
\begin{equation}\label{eq:stage1_rom_in_matrices}
    \mathbb{A}^T_\mu \cdot c = \mathbb{G}^T_\mu(v_H),
\end{equation}
where
\begin{equation*}
    (\mathbb{A}^T_{\mu})_{m,n} := a_\mu(\rbt{n}, \rbt{m}) \qquad\text{and}\qquad
    \mathbb{G}^T_\mu(v_H)_m := a^T_\mu(v_H, \rbt{m}).
\end{equation*}
While~\eqref{eq:stage1_rom_in_matrices} can be solved quickly when $\dimVfrbkt$ is sufficiently small, we still need to
re-assemble this equation system for each new parameter $\mu$ and coarse-scale function $v_H$.
This requires time and memory that scales with $\dim \Vfhkt$.
To avoid these high-dimensional computations we exploit \eqref{eq:parameter_separable} and pre-assemble
matrices and vectors
\begin{equation*}
    (\mathbb{A}^T_{q})_{m,n} := a_q(\rbt{n}, \rbt{m}) \qquad\text{and}\qquad
    (\mathbb{G}^T_{q,j})_{m} := a^T_q(\phi_{i_{T,j}}, \rbt{m}),
\end{equation*}
for $1 \leq q \leq \Qa$ and $1 \leq j \leq \KT$, where $\KT$ 
is the number of finite-element basis functions $\phi_{i}$ of $V_H$ with support containing $T$ and $i_{T,1},
\ldots, i_{T,\KT}$ is an enumeration of these basis functions. 
Then, $\mathbb{A}^T_\mu$ and $\mathbb{G}^T_\mu(v_H)$ can be determined as
\begin{equation*}
    \mathbb{A}^T_{\mu} := \sum_{q=1}^{\Qa}\theta_q(\mu) \mathbb{A}^T_{q} \qquad\text{and}\qquad
    \mathbb{G}^T_\mu(v_H) := \sum_{q=1}^{\Qa}\sum_{j=1}^{\KT}\theta_q(\mu)\lambda_{i_{T,j}}(v_H) \mathbb{G}^T_{q,j},
\end{equation*}
where by $\lambda_{i}\in V_H^\prime$ we denote the dual basis of $\phi_i$.

To compute $\Kmurb$, we further store the matrices 
\begin{equation*} 
    \left(\Ktqo\right)_{j,i} := \skal{A_q \chi_T \nabla \phi_i}{\nabla \phi_j}_{U_k(T)}
    \qquad\text{and}\qquad
    \left(\Ktqrb\right)_{j,n} := \skal{A_{q} \nabla \rbt{n}}{\nabla \phi_j}_{U_k(T)}.
\end{equation*}
Then we have:
\begin{equation*} 
    \left(\Ktmurb\right)_{j,i} = 
    \sum_{q=1}^{\Qa}\theta_q(\mu)\left[\left(\Ktqo\right)_{j,i} -
    \sum_{n=0}^{N_T} c^i_n(\mu) \left(\Ktqrb\right)_{j,n}\right],
\end{equation*}
where $c^i(\mu) \in \mathbb{R}^{N_T}$ is given as the solution of
\begin{equation*}
    \mathbb{A}^T_\mu \cdot c^i(\mu) = \mathbb{G}^T_\mu(\phi_i).
\end{equation*}
Note that $(\mathbb{K}^T_{q,0})_{j,i}$, $(\mathbb{K}^T_q)_{j,n}$ and $c^i(\mu)$ are zero unless the support of $\phi_i$
is non-disjoint from $T$ and the support of $\phi_j$ is non-disjoint from $U_k(T)$.
In particular, only $\KT$ reduced problems have to be solved in order to determine $\Ktmurb$.
The total computational effort for solving these problems is of order $\mathcal{O}(\Qa N_T^2 + N_T^3 + \KT N_T^2)$, where
the first term corresponds to the assembly of $\mathbb{A}_\mu^T$, the second term to its LU decomposition and third term
to the solution of the $\KT$ linear systems using forward/backward substitution.

Finally, to efficiently evaluate $\eta_{T,\mu}(\QQktmrb(v_H))$, first note that
\begin{equation*}
    \eta_{T,\mu}(\QQktmrb(v_H)) = \alpha^{-1/2} \norm{\mathcal{R}_{\Vfhkt}(a_\mu^T(v_H, \cdot) - a_\mu(\QQktmrb(v_H),
    \cdot))}_1,
\end{equation*}
where $\mathcal{R}_{\Vfhkt}: (\Vfhkt)^\prime \to \Vfhkt$ denotes the Riesz isomorphism.

Following~\cite{BuhrEngwerEtAl2014}, let $\Wfrbkt$ denote the $M_T$-dimensional linear subspace of $\Vfhkt$ that is
spanned by the vectors
\begin{align*}
    \set[\mathcal{R}_{\Vfhkt}(a_q(\rbt{n}, \cdot))]{1\leq q \leq \Qa, 1\leq n \leq N_T} \,\cup\,\hspace{-20em}& \\&
    \qquad\qquad\set[\mathcal{R}_{\Vfhkt}(a_q^T(\phi_{i_{T,j}}, \cdot))]{1\leq q \leq \Qa, 1\leq j \leq \KT}.
\end{align*}
Choose an $H^1$-orthonormal basis $\rbestt{m}$ of $\Wfrbkt$.
Since $\mathcal{R}_{\Vfhkt}(a_\mu^T(v_H, \cdot) - a_\mu(u_N, \cdot))$ lies in $\Wfrbkt$ for each $u_N \in \Vfrbkt$, we
obtain
\begin{equation*}
    \begin{aligned}
        \eta_{T,\mu}(\QQktmrb(v_H)) 
        &= \alpha^{-1/2} \norm*{\left[\left(\mathcal{R}_{\Vfhkt}(a_\mu^T(v_H, \cdot) - a_\mu(\QQktmrb(v_H), \cdot)),
        \rbestt{m}\right)_1\right]^{M_T}_{m=1}} \\
        &=\alpha^{-1/2} \norm*{\left[a_\mu^T(v_H, \rbestt{m}) - a_\mu(\QQktmrb(v_H), \rbestt{m})\right]^{M_T}_{m=1}}.
    \end{aligned}
\end{equation*}
Hence, defining the $M_T$-dimensional vectors and $M_T \times N_T$ matrices
\begin{equation}
    (\hat{\mathbb{G}}^T_{q,j})_{m} := a^T_q(\phi_{i_{T,j}}, \rbestt{m})\qquad\text{and}\qquad
    (\hat{\mathbb{A}}^T_{q})_{m,n} := a_q(\rbt{n}, \rbestt{m}),
\end{equation}
we obtain
\begin{equation}\label{eq:stage1_error_estimate_in_matrices}
    \eta_{T,\mu}(\QQktmrb(v_H)) = \alpha^{-1/2}
    \norm*{
        \sum_{q=1}^{\Qa}\theta_q(\mu)\left(\sum_{j=1}^{\KT}\lambda_{i_{T,j}}(v_H) \hat{\mathbb{G}}^T_{q,j}
        - \hat{\mathbb{A}}^T_{q} \cdot c\right)
    },
\end{equation}
where $c$ is again given by \eqref{eq:stage1_rom_in_matrices}.
We remark that $M_T$ can be bounded by $\Qa(N_T + \KT)$.
Hence, the cost of evaluating \eqref{eq:stage1_error_estimate_in_matrices} is of order 
$\mathcal{O}(\Qa^2(N_T + \KT)^2)$.

\subsubsection{Basis generation}\label{sec:stage1_basisgen}

To build the reduced spaces $\Vfrbkt$, we use a standard weak greedy \cite{BinevCohenEtAl2011} approach in order to 
minimize the model order reduction error $\QQktm(v_H) - \QQktmrb(v_H)$ for all $\mu \in \Params$ and $v_H \in V_H$.
To this end, we choose a target error tolerance $\varepsilon_1$ and an appropriate training set of parameters
$\ParamsTrain$, over which we estimate the maximum reduction error.
The initial reduced space $\Vfrbkt$ is chosen as the zero-dimensional space.
Then, in each iteration, the reduction error is estimated with $\eta_{T,\mu}(\QQktmrb(v_H))$ for all $\mu \in
\ParamsTrain$ and $\phi_{i_{T,j}}$, and a pair $\mu^*$, $\phi_{i_{T,j^*}}$ maximising the estimate is selected.
Thanks to the offline-online decomposition of $\eta_{T,\mu}$ this step does not involve any high-dimensional
computations, so $\ParamsTrain$ can be chosen large.
Since~\eqref{eq:stage1_rom} is linear, it suffices to consider the basis functions $\phi_{i_{T,j}}$ as $v_H$.
After $\mu^*$, $j^*$ have been found, $\QQktmrb[T][\mu^*](\phi_{i_{T,j^*}})$ is computed. 
$\Vfrbkt$ is extended with this solution \emph{snapshot}, and the offline-online decompostion for this extended reduced
space is computed.
The iteration ends, when the maximum estimated error drops below $\varepsilon_1$.
A formal definition of the procedure is given in Algorithm~\ref{alg:stage1_greedy}.

\begin{algorithm2e}
   \KwData{$T$, $\ParamsTrain$, $\varepsilon_1$}
   \KwResult{$\Vfrbkt$}
   $\Vfrbkt \leftarrow \{0\}$\;
   \While{$\max_{\mu\in\ParamsTrain}\max_{1\leq j\leq \KT}\eta_{T,\mu}(\QQktmrb(\phi_{i_T,j})) > \varepsilon_1$}{
       $(\mu^*, j^*) \leftarrow \argmax_{(\mu^*, j^*) \in \ParamsTrain \times \{1,\ldots,\KT\}}\eta_{T,\mu}(\QQktmrb(\phi_{i_T,j}))$\;
       $\Vfrbkt \leftarrow \Span(\Vfrbkt \cup \{\QQktm[T][\mu^*](\phi_{i_{T,j^*}})\})$\;
   }
   \caption{Weak greedy algorithm for the generation of $\Vfrbkt$.}\label{alg:stage1_greedy}
\end{algorithm2e}

\subsection{Stage 2: Computing RB approximations of $\utsm$}
\label{sec:stage_2_red}

\subsubsection{Definition of the reduced order model}\label{sec:stage_2_rom}
In order to find an approximate solution of $\utsm$, we assume to be given an appropriate reduced subspace $\Vtsrb$ of
$\Vts$. 
As we have proven inf-sup stability of $\Btsm$ in~\cref{thm:two_scale_inf_sup}, and since the inf-sup stability is
preserved by restricting $\Btsm$ to a linear subspace, we can define the reduced two-scale solution $\utsmrb$ as the
unique solution of the residual minimization problem
\begin{equation}\label{eq:stage2_rom}
    \utsmrb := \argmin_{\uts \in \Vtsrb} \sup_{\vts \in \Vts} \frac{\Fts(\vts) - \Btsm(\uts, \vts)}{\Snorm{\vts}}.
\end{equation}
As a direct consequence of~\cref{thm:a_priori}, $\utsmrb$ is a quasi best-approximation of $\utsm$ within $\Vtsrb$. 

\subsubsection{Error estimation}\label{sec:stage_2_err}
We have already defined a posteriori error estimators for approximations of the two-scale solution $\utsm$
in~\cref{sec:a-posteriori_estimators}.
In~\cref{thm:a_posteriori} we have shown that these estimators yield efficient upper bounds for the approximation errors
in the two-scale energy norm as well as in the Sobolev 1-norm.
Note that even though we will use the Stage 1 approximations $\Ktmurb$ of $\Ktmu$ to build the reduced space $\Vtsrb$,
the derived error estimates are with respect to the true LOD solution and take these approximation errors into account.

\subsubsection{Offline-Online Decomposition}\label{sec:stage_2_offon}
For the offline-online decomposition of \eqref{eq:stage2_rom}, we proceed similar to the decomposition of the Stage~1
error estimator $\eta_{T,\mu}$.
Denote by $\mathcal{R}_{\Vts}: \Vts^\prime \to \Vts$ the Riesz isomorphism for $\Vts$.
Then~\eqref{eq:stage2_rom} is equivalent to solving
\begin{equation}\label{eq:stage2_rom_riesz}
    \utsmrb := \argmin_{\uts \in \Vtsrb} \Snorm{\mathcal{R}_{\Vts}(\Fts) - \mathcal{R}_{\Vts}(\Btsm(\uts, \cdot))}^2.
\end{equation}
Let $N := \dim \Vtsrb$, and let $\rbts{n}$, $1 \leq n \leq N$ be a basis of $\Vtsrb$. 
We again construct an $\Snorm{}$-orthonormal basis $\rbestts{m}$ for the $M$-dimensional subspace $\Wtsrb$ of $\Vts$
spanned by the vectors
\begin{equation}\label{eq:two_scale_estimator_generators}
    \{\mathcal{R}_{\Vts}(\Fts)\} \cup \{\mathcal{R}_{\Vts}(\Btsq(\rbts{n}, \cdot)) \,|\, 1\leq n \leq N, 1 \leq q
    \leq \Qa \},
\end{equation}
where
\begin{equation*}
    \begin{split}
    \Btsq\left((u_H, \ufti{1}, \dots, \ufti{{\abs{\mathcal{T}_H}}}),  (v_H, \vfti{1}, \dots, \vfti{{\abs{\mathcal{T}_H}}})\right) &:=
    \\ a_{q}(u_H - \sumT \uft, v_H) + &\rho^{1/2}\sumT a_{q}(\uft, \vft) - a_{q}^T(u_H, \vft),
    \end{split}
\end{equation*}
such that $\Btsm$ has the decomposition:
$
    \Btsm = \sum_{q=1}^{\Qa} \theta_q(\mu) \Btsq.
$
Using these bases, we define matrices $\hat{\mathbb{A}}_q \in \mathbb{R}^{M\times N}$ and the vector
$\hat{\mathbb{F}} \in \mathbb{R}^M$ by
\begin{equation*}
    (\hat{\mathbb{A}}_q)_{m,n} := \Btsq(\rbts{n}, \rbestts{m}) \qquad\text{and}\qquad
    \hat{\mathbb{F}}_m := \Fts(\rbestts{m}).
\end{equation*}
Then, with
$    \hat{\mathbb{A}}_\mu := \sum_{q=1}^{\Qa} \theta_q(\mu)\hat{\mathbb{A}}_q,$
solving~\eqref{eq:stage2_rom_riesz} is equivalent to solving the least-squares problem
\begin{equation}\label{eq:stage2_rom_in_matrices}
    c(\mu) := \argmin_{c \in \mathbb{R}^N} \norm{\hat{\mathbb{F}} - \hat{\mathbb{A}}_\mu\cdot c}^2,
\end{equation}
where
 $   \utsm = \sum_{n=1}^N c_n(\mu) \rbts{n}$.
In the same way we can evaluate the error bounds $\eta_{a,\mu}(\utsm)$ and $\eta_{1,\mu}(\uts)$ as
\begin{align*}
    \eta_{a,\mu}(\utsm) &= 
    \sqrt{5}\CPG^{-1}
    \norm{\hat{\mathbb{F}} - \hat{\mathbb{A}}_\mu\cdot c(\mu)},\\ 
    \eta_{1,\mu}(\utsm) &= 
    \sqrt{5}\CI\alpha^{-1/2}\CPG^{-1}
    \norm{\hat{\mathbb{F}} - \hat{\mathbb{A}}_\mu\cdot c(\mu)}.
\end{align*}
Since $M \leq \Qa N + 1$, the computational effort for assembling the least-squares system is of order
$\mathcal{O}(\Qa^2N^2)$. Solving the system requires $\mathcal{O}(\Qa N^3)$ operations, and
evaluating the estimators requires $\mathcal{O}(\Qa^2 N^2)$ operations.
In particular, the computational effort is completely independent from $h$ and $H$. 

We still need to show how the matrices $\hat{\mathbb{A}}_q$ and the vector $\hat{\mathbb{F}}$ can be computed after Stage~1
without using any data or operations associated with the fine mesh $\grid$.
To this end, we assume that $\Vtsrb \subseteq V_H \oplus \Vfrbkt[T_1] \oplus
\dots \oplus \Vfrbkt[T_{\Tlast}] \subset \Vts$.
By construction of the $\Wfrbkt$, we see that for such a $\Vtsrb$, $\Wtsrb$ is a linear subpsace of $V_H \oplus \Wfrbkt[T_1] \oplus
\dots \Wfrbkt[\Tlast]$.
Choose a basis $\rbts{n}$ of $\Vtsrb$ with coefficient vectors
\begin{equation*}
    \rbtscoeff{n} = (\rbtscoeff{n,H}, \rbtscoeff{n,T_1}, \dots, \rbtscoeff{n,\Tlast}) \in
    \mathbb{R}^{\dimVH} \oplus \mathbb{R}^{N_{T_1}} \oplus \dots \oplus \mathbb{R}^{N_{\Tlast}}
\end{equation*}
w.r.t.\ the finite element basis $\phi_i$ of $V_H$ and the reduced bases $\rbt{n}$ of $\Vfrbkt$.
Denote by $\mathbb{S}$ the $\dimVH \times \dimVH$ matrix of the Sobolev 1-inner product on $V_H$ given by
\begin{equation*}
    \mathbb{S}_{j,i} := \int_\Omega \nabla \phi_i \cdot \nabla \phi_j \,\mathrm{dx},
\end{equation*}
and use the $H^1$-orthonormal bases $\rbestt{m}$ to isometrically represent the
vectors~\eqref{eq:two_scale_estimator_generators} as coefficient vectors in the
direct sum Hilbert space $\underline{\mathfrak{W}}^{rb} := \mathbb{R}^{\dimVH} \oplus \mathbb{R}^{M_{T_1}} \oplus \dots
\mathbb{R}^{M_{\Tlast}}$ equipped with the $\mathbb{S}$-inner product in the first and with the Euclidean inner products in the
remaining components.
Checking the definitions of $\Btsq$, $\Fts$, $\hat{\mathbb{A}}^T_q$, $\hat{\mathbb{G}}^T_{q,j}$, $\Ktqo$ and $\Ktqrb$, we obain the
vectors
\begin{equation}\label{eq:two_scale_est_gen_in_coord_f}
    (\mathbb{S}^{-1}\cdot[F(\phi_i)]_i,0\dots,0),
\end{equation}
and
\begin{equation}\label{eq:two_scale_est_gen_in_coord_b}
    \begin{pmatrix}
        \mathbb{S}^{-1}\cdot\sumT \left(\Ktqo \cdot \rbtscoeff{n,H} - \Ktqrb \cdot \rbtscoeff{n,T}\right) \\
    \rho^{1/2}\hat{\mathbb{A}}^{T_1}_q\rbtscoeff{n,T_1}-
    \rho^{1/2}\sum_{j=1}^{J_{T_1}}\rbtscoeff{n,H,i_{T_1,j}}\hat{\mathbb{G}}^{T_1}_{q,j} \\
    \vdots\\
    \rho^{1/2}\hat{\mathbb{A}}^{\Tlast}_q\rbtscoeff{n,\Tlast}-
    \rho^{1/2}\sum_{j=1}^{J_{\Tlast}}\rbtscoeff{n,H,i_{\Tlast,j}}\hat{\mathbb{G}}^{\Tlast}_{q,j}
    \end{pmatrix},
\end{equation}
for $1 \leq n \leq N$.
After having computed~\eqref{eq:two_scale_est_gen_in_coord_f} and~\eqref{eq:two_scale_est_gen_in_coord_b}, we compute a
$\underline{\mathfrak{W}}^{rb}$-orthonormal basis $\rbesttscoeff{m}$ for these vectors with coefficients
\begin{equation*}
    \rbesttscoeff{m} = (\rbesttscoeff{m,H}, \rbesttscoeff{m,T_1}, \dots, \rbesttscoeff{m,\Tlast}),
\end{equation*}
such that 
\begin{equation*}
    \rbestts{m} :=
    (\sum_{i=1}^{\dimVH}\rbesttscoeff{m,H,i}\cdot\phi_i,
    \sum_{l=1}^{M_{T_1}}\rbesttscoeff{m,T_1,l}\cdot\rbestt{l},
    \dots,
    \sum_{l=1}^{M_{\Tlast}}\rbesttscoeff{m,\Tlast,l}\cdot\rbestt{l}
    )
\end{equation*}
is an $\Snorm{}$-orthonormal orthonormal basis for $\Wtsrb$.

Finally, following the definitions again, we see that $\hat{\mathbb{A}}_q$ and $\hat{\mathbb{F}}$ can be computed as
\begin{equation*}
    \hat{\mathbb{F}}_m = \mathcal{F}(\rbestts{m}) = \sum_{i=1}^{\dimVH}\rbesttscoeff{m,H,i}\cdot F(\phi_i),
\end{equation*}
and
\begin{align*}
    (\hat{\mathbb{A}}_q)_{m,n} 
    &= \Btsq(\rbts{n}, \rbestts{m}) \\
    &=
    \begin{multlined}[t]
        \sumT  \rbesttscoeff{m,H}^T \cdot (\Ktqo \cdot \rbtscoeff{n,H} - \Ktqrb \cdot \rbtscoeff{n,T}) \\
        + \rho^{1/2}\sumT\left( (\rbesttscoeff{m,T})^T\cdot\hat{\mathbb{A}}^T_q\cdot\rbtscoeff{n,T}
        - \sum_{j=1}^{J_{\Tlast}}(\rbesttscoeff{m,T})^T\cdot\hat{\mathbb{G}}^{T}_{q,j}\cdot\rbtscoeff{n,H,i_{T,j}}\right).
    \end{multlined}
\end{align*}

\subsubsection{Basis generation}\label{sec:stage_2_bas_gen}
To build the reduced Stage 2 space $\Vtsrb$, we follow the same methodology as in~\cref{sec:stage1_basisgen} and
use a greedy search procedure to iteratively extend $\Vtsrb$ until a given error tolerance $\varepsilon_2$ for the
MOR error estimate $\eta_{a,\mu}(\utsmrb)$ is reached.
However, in contrast to~\cref{sec:stage1_basisgen}, we will not use the full-order model~\eqref{eq:two_scale_PGLOD}, or
equivalently~\eqref{eq:PG_in_matrices}, to
compute solution snapshots, but rather its Stage 1 approximation. I.e., we solve
\begin{equation}\label{eq:stage_2_fom}
   \Kmurb \cdot \uhkmcoeff = \mathbb{F}
\end{equation}
to determine the $V_H$-component of the solution, followed by solving the Stage 1 corrector
ROMs~\eqref{eq:stage1_rom} for each $T \in \Gridh$ to determine the fine-scale components $\QQktmrb(\uhkm)$ of
the two-scale solution snapshot.
The full algorithm is given by Algorithm~\ref{alg:stage2_greedy}. 

Since we only extend $\Vtsrb$ with approximations of the true solution snapshots of the full-order model,
note that Algorithm~\ref{alg:stage2_greedy} is no longer a weak greedy algorithm in the sense of~\cite{BinevCohenEtAl2011}.
In particular note that the model reduction error is generally non-zero even for parameters $\mu^*$ for which the
corresponding Stage 1 snapshot has been added to $\Vtsrb$.
Thus, when $\varepsilon_1$ is chosen too large in comparison to $\varepsilon_2$, a single $\mu^*$ might be selected
twice, causing Algorithm~\ref{alg:stage2_greedy} to fail.
In such a case, the individual Stage 1 errors for each $T \in \Gridh$ can be estimated to further enrich the Stage 1
spaces for which the error is too large.
We will not discuss such an approach in more detail here and instead note that in practice it is feasible to choose 
$\varepsilon_1$ small enough to avoid such issues (cf.~\cref{sec:experiments}).
In particular, for sufficiently small $\varepsilon_1$ we can expect the convergence rates of a weak greedy algorithm
with exact solution snapshots to be preserved by Algorithm~\ref{alg:stage2_greedy} up to the given target tolerance
$\varepsilon_2$.

\begin{algorithm2e}
   \KwData{$\ParamsTrain$, $\varepsilon_2$}
   \KwResult{$\Vtsrb$}
   $\Vtsrb \leftarrow \{0\}$\;
   \While{$\max_{\mu\in\ParamsTrain} \eta_{a,\mu}(\utsmrb) > \varepsilon_2$}{
       $\mu^* \leftarrow \argmax_{\mu^* \in \ParamsTrain}\eta_{a,\mu}(\utsmrb)$\;
       $\uhkm[\mu^*] \leftarrow \text{solution of~\eqref{eq:stage_2_fom}}$\;
       $\Vtsrb \leftarrow \Span(\Vtsrb \cup \{(\uhkm[\mu^*], \QQktmrb[T_1][\mu^*](\uhkm[\mu^*]), \dots, \QQktmrb[\Tlast][\mu^*](\uhkm[\mu^*]))\})$\;
   }
   \caption{Weak greedy algorithm for the generation of $\Vtsrb$.}\label{alg:stage2_greedy}
\end{algorithm2e}

\section{Numerical experiments}\label{sec:experiments}
In this section we apply the above described method to two test cases and evaluate its efficiency.
For the first, smaller problem we mainly investigate the MOR error and
the performance of the certified error estimator.
The second, large-scale problem will be used to assess the computational speed-up achieved by the TSRBLOD.\@

In both cases, we use structured 2D-grids $\grid$, $\Gridh$ with quadrilateral elements on the domain $\Omega=[0,1]^2$.
We specify the number of elements by $n_h \times n_h$ and $n_H \times n_H$ respectively. 
We use the interpolation operator from \cite[Example 3.1]{LODbook}
and choose the oversampling parameter $k$ as the first integer to satisfy $k > |\log(H)|$.
For the evaluation of the error estimator $\eta_{a, \mu}$~\eqref{eq:stage2_est_a} we
approximate $\gamma_k \approx \alpha^{1/2}\CI^{-1}$ and assume $C_{\mathcal{I}_H} \approx 1$. For quadrilateral elements, the overlapping constant can be explicitly computed
by $\CO = (2k + 1)^2$.
Moreover, we approximate $\alpha$ and the contrast $\kappa$ by replacing $\mathcal{P}$ in \cref{eq:ellipticity} by the training set
$\ParamsTrain$.
Thus, while it is not guaranteed that our approximations yield strict upper bounds on the model order reduction errror, the decay
rate and choice of snapshot parameters will not be affected.

We use an \texttt{MPI} distributed implementation to benefit from parallelization of the localized corrector problems,
which gives significant speed-ups for all considered methods.
All our computations have been performed on an HPC cluster with 1024 parallel processes.

\paragraph{Petrov--Galerkin variant of the RBLOD method}
In order to compare our method to the RBLOD approach introduced in~\cite{RBLOD},
we have implemented a corresponding version of the RBLOD that is applicable to our LOD formulation.
In particular, we use the interpolation operator from \cite[Example 3.1]{LODbook} and Petrov-Galerkin
projection~\eqref{eq:PG_} in contrast to the Cl\'ement interpolation and Galerkin projection used in~\cite{RBLOD}.
Further, in~\cite{RBLOD} individual ROMs for the local correctors $\QQktm(\phi_i)$ are constructed, where the snapshot parameters are chosen identically among all $\QQktm[T^\prime](\phi_i)$, $T^\prime \subset \operatorname{supp} \psi_z$.
In the RBLOD variant implemented by us, we independently train the corrector ROMS for each $T \in \Gridh$ as is done in Stage~1 of the TSRBLOD.\@
We note that for both RBLOD variants, the number of ROMs constructed for each coarse element $T$ is equal to the number of coarse-mesh basis functions $\phi_i$ supported on this element, whereas Stage 1 of the TSRBLOD constructs a single ROM to approximate all local correctors.

\paragraph{Error measures}
To quantify the accuracy of the TSRBLOD, we use a validation set $\ParamsVer \subset \mathcal{P}$ of $10$ random
parameters and compute the maximum relative approximation errors w.r.t.\ the PG--LOD
solutions for this set, i.e.,
\begin{align*}
e^{*, \text{rel}}_{\text{LOD}} := \max_{\mu\in\ParamsVer} \frac{\| \uhkm - \tilde{u}_{\mu} \|_{*} }{\|\uhkm\|_{*}}, 
\end{align*}
where $\tilde{u}_{\mu}$ denotes the coarse-scale component of either the
RBLOD or TSRBLOD solution, $\uhkm$ denotes the coarse component of the PG--LOD solution~\eqref{eq:PG_}, and $^*$ stands for either the $H^1$- or $L^2$-norm. With the solution of the FEM approximation $u_{h,\mu}$ w.r.t.~$\grid$, we let
\begin{align*}
e^{L^2, \text{rel}}_{\text{FEM}} &:=  \max_{\mu\in\ParamsVer} \frac{\| u_{h,\mu} - \tilde{u}_{\mu}\|_{L^2} }{\|u_{h,\mu}\|_{L^2}}, \qquad
 &e^{L^2, \text{rel}}_{\text{LOD-FEM}} &:=  \max_{\mu\in\ParamsVer} \frac{\| u_{h,\mu} - \uhkm \|_{L^2} }{\|u_{h,\mu}\|_{L^2}}.
\end{align*}

\paragraph{Time measures}
For an analysis of the computational wall times, we consider the following quantities:
\begin{itemize}[leftmargin=*]
    \item $t^\text{offline}_{1,\text{av}}(T)$: Average (arithmetic mean) time for creating a ROM for the localized corrector problem(s)
        corresponding to a single coarse element $T \in \Gridh$ as discussed in \cref{sec:stage1}.
	For the RBLOD , this involves all individual corrector problems for the four basis functions that are supported on
    $T$.
	\item $t^\text{offline}_{1}$: Total time required to build the corrector ROMs. In case of full parallelization, this
        is equal to the maximum of the Stage 1 times over all $T \in \Gridh$.
	\item $t^\text{offline}_2$: Time for creating the Stage 2 TSRBLOD ROM as discussed in \cref{sec:stage_2_red}.
	\item $t^{\text{offline}}$: Total offline time for building the final reduced model. For the RBLOD this is equal to $t^\text{offline}_{1}$. For the TSRBLOD this additionally includes $t^\text{offline}_2$.	
	\item $t^{\text{online}}$: Average time for solving the obtained ROM for a single new $\mu \in \ParamsVer$.
        In case of the RBLOD, the reduced corrector problems are solved sequentially on a single compute node.
	\item $t^{\text{LOD}}$: Average time needed to compute the PG--LOD with parallelization of the corrector problems
        for a single new $\mu \in \ParamsVer$.
\end{itemize}

\subsection{Test case 1}

The first test case is taken from~\cite[Section 4.1]{RBLOD} and has a 
one-dimensional parameter space $\Params:= [0,5]$. The coefficient $A_\mu$ is visualized in~\cref{mp1:pics}, and we set $f \equiv 1$.
For the sake of brevity, we refer to~\cite{RBLOD} for an exact definition of $A_\mu$.
In order to resolve the microstructure of the problem,
we choose $n_h = 2^{8}$ which results in $65,536$ fine-scale elements. The approximated maximum contrast of $A_\mu$ is $\kappa \approx 13$.

\begin{figure}[h]
	\centering
	\includegraphics[width=1.6in]{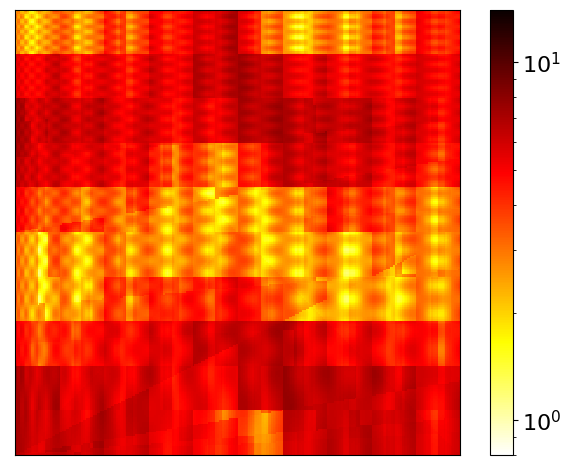}
	\includegraphics[width=1.6in]{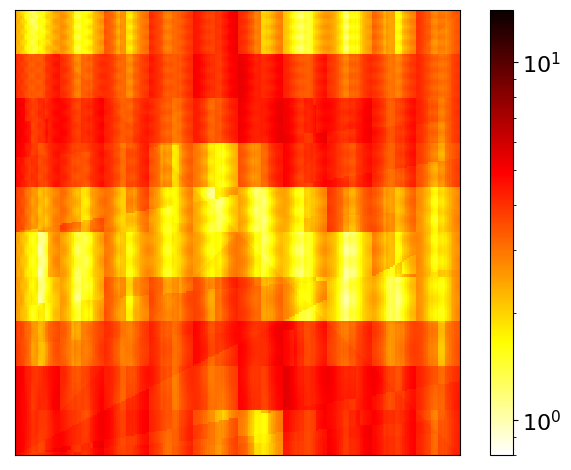}
	\includegraphics[width=1.6in]{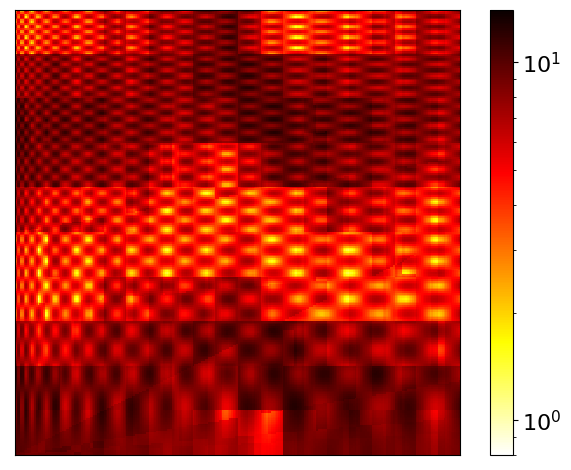}
	\captionof{figure}{\footnotesize{Diffusion coefficient $A_\mu$ for $\mu_1 = 1.8727$ (left), $\mu_2 = 2.904$ (middle), and $\mu_3 = 4.7536$ (right) for test case 1.}}
	\label{mp1:pics}
	\centering
\end{figure}

\begin{table}
	\footnotesize
	\centering
	{\def\arraystretch{1.7}\tabcolsep=3pt
		\begin{tabular}{|l|c|c|c|c|c|c|}
			mesh size $n_H$& \multicolumn{2}{c|}{$2^{3}$} & \multicolumn{2}{c|}{$2^{4}$} &\multicolumn{2}{c|}{$2^{5}$} \\ \hline
			method  &\hphantom{T}RBLOD\hphantom{T} & TSRBLOD&\hphantom{T}RBLOD\hphantom{T} & TSRBLOD 
			&\hphantom{T}RBLOD\hphantom{T} & TSRBLOD \\ \hline  \hline
			$t^\text{offline}_{1, \text{av}}(T)$ 	&	41		&  	61		&  	39		& 	61		&   33		&	55		\\
			$t^\text{offline}_{1}$   			 	&	71		&  	106		&  	67		& 	102		&   63 		&	98		\\
			$t^\text{offline}_2$   					&	- 		& 	8 		&  	-		& 	56  	&	- 		&	472	\\
			$t^\text{offline}$    					&	71		& 	114 	&  	67		& 	158		&   63		&	570	\\ \hline
			cum. size St.1 							&	2346	&  	1670	&	8718	& 	6134	&	31810 	&	22189	\\
			av. size St.1           				&	9.16	&  	26.09	& 	8.51	& 	23.96	&	7.77 	&	21.67	\\
			size St.2 								&	-		&  	8		& 	-		& 		9	&		 	&	9		\\
			\hline \hline
			$t^{\text{LOD}}$ 
			& 	\multicolumn{2}{c|}{0.69} 	
			& 	\multicolumn{2}{c|}{0.49}	
			& 	\multicolumn{2}{c|}{0.90}	 \\ \hline
			$t^\text{online}$ 		 				&	0.0610	&  	0.0003	&  	0.2272	& 	0.0003	&   1.0462 	&	0.0003	 \\ \hline \hline
			speed-up LOD 	 						&	11		&  	2506	& 	2 		& 	1536 	&   1		& 	2714	 \\ \hline \hline
			$e^{H^1, \text{rel}}_{\text{LOD}}$ 	 	&	1.97e-5	&	7.30e-4	&	5.08e-5 &	2.94e-4 &	1.11e-4 &	4.21e-4	 \\
			$e^{L^2, \text{rel}}_{\text{LOD}}$ 	 	&	4.89e-6 &	2.71e-4 &	6.77e-6 &	1.03e-4 &	7.70e-6	&	1.32e-4  \\
			\hline \hline
			$e^{L^2, \text{rel}}_{\text{FEM}}$ 		&	2.46e-2 &	2.46e-2	&	9.05e-3	&	9.05e-3 &	3.98e-3	&	3.98e-3 \\ \hline
			$e^{L^2, \text{rel}}_{\text{LOD-FEM}}$	 
			& 	\multicolumn{2}{c|}{2.46e-2} 
			& 	\multicolumn{2}{c|}{9.05e-3} 
			& 	\multicolumn{2}{c|}{3.98e-3}\\ \hline
	\end{tabular}}
	\captionsetup{width=\textwidth}
	\caption{\footnotesize{%
			Performance, ROM sizes and accuracy of the methods for test case 1 with tolerances $\varepsilon_1 = 0.001$ and $\varepsilon_2 = 0.01$ and varying coarse-mesh sizes. All times are given in seconds.	\label{Tab:mp1:1}}}
\end{table}

\subsubsection{Performance and error comparison}
We vary the coarse mesh size, $n_H \in \{2^{3},2^{4},2^{5}\}$, and show results for
$\varepsilon_1 = 0.001$, $\varepsilon_2=0.01$ and a training set $\ParamsTrain \subset \Params$ of $50$ equidistant
parameters in \Cref{Tab:mp1:1}.

We observe that this choice of tolerances clearly suffices for both the RBLOD and TSRBLOD to match the error of the PG--LOD
solution w.r.t.\ the FEM reference solution.
For all coarse mesh sizes, the TSRBLOD produces a ROM with 8 or 9 basis functions without loosing accuracy.
The offline phase of Stage~1 for the TSRBLOD is longer and yields larger ROMS per element than
the RBLOD.\@
However, for the RBLOD four ROMS per element are required s.t.\ the total number of basis vectors actually is smaller for
the TSRBLOD.\@
The offline time for Stage~2 increases for larger $n_H$ due to the increasing complexity of assembling and solving the
coarse-scale problem, even with an RB approximation of the corrector problems.
However, this only effects the offline phase for the TSRBLOD, and the online times stay at a constant level since the
dimension of the Stage~2 ROM is largely unaffected by the number of coarse elements.
Considering the speed-ups achieved by both methods, it can clearly be seen that the RBLOD has just a slight
benefit over the (parallelized) PG--LOD, whereas the TSRBLOD shows a significant speed-up for all $n_H$ without
requiring any parallelization.

\subsubsection{Error and estimator decay}

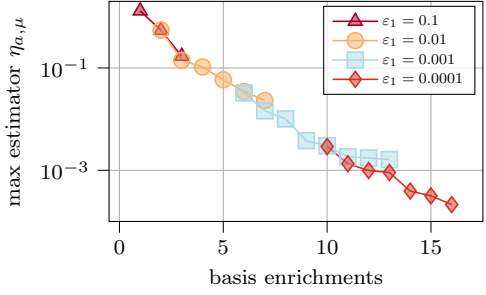
\begin{figure}[t]
	\footnotesize
	\centering
	\begin{tikzpicture}
	\definecolor{color0}{rgb}{0.65,0,0.15}
	\definecolor{color1}{rgb}{0.84,0.19,0.15}
	\definecolor{color2}{rgb}{0.96,0.43,0.26}
	\definecolor{color3}{rgb}{0.99,0.68,0.38}
	\definecolor{color4}{rgb}{1,0.88,0.56}
	\definecolor{color5}{rgb}{0.67,0.85,0.91}
	\begin{axis}[
	name=left,
	anchor=west,
	width=6.5cm,
	height=4.5cm,
	log basis y={10},
	tick align=outside,
	tick pos=left,
	legend style={nodes={scale=0.7}, fill opacity=0.8, draw opacity=1, text opacity=1, 
	},
	x grid style={white!69.0196078431373!black},
	xlabel={basis enrichments},
	ylabel={max estimator $\eta_{a, \mu}$},
	xmajorgrids,
	xtick style={color=black},
	y grid style={white!69.0196078431373!black},
	ymajorgrids,
	ymode=log,
	ymin=1e-04, ymax=2,
	ytick style={color=black},
	ytick pos=left,
	]
	\addplot [semithick, color0, mark=triangle*, mark size=3, mark options={solid, fill opacity=0.5}]
	table {%
		1 1.302208490599 
		2 0.542298633478 
		3 0.169259772169
	};
	\addlegendentry{$\varepsilon_1 = 0.1\hphantom{000}$}
	\addplot [semithick, color3, mark=*, mark size=3, mark options={solid, fill opacity=0.5}]
	table {%
		2 0.548045241003 
		3 0.140901484479 
		4 0.104570132829 
		5 0.058864107218 
		6 0.034589798089 
		7 0.023123372984
	};
	\addlegendentry{$\varepsilon_1 = 0.01\hphantom{00}$}
	\addplot [semithick, color5, mark=square*, mark size=3, mark options={solid, fill opacity=0.5}]
	table {%
		6 0.032599625814 
		7 0.014522966903 
		8 0.010109952521 
		9 0.003802794378 
		10 0.003003471958 
		11 0.001834614254 
		12 0.001739345745 
		13 0.001618788041
	};
	\addlegendentry{$\varepsilon_1 = 0.001\hphantom{0}$}
	\addplot [semithick, color1, mark=diamond*, mark size=3, mark options={solid, fill opacity=0.5}]
	table {%
		10 0.002914232833 
		11 0.001344315689 
		12 0.000991445067 
		13 0.000905171861 
		14 0.000392978852 
		15 0.000316746807 
		16 0.000214242687
	};
	\addlegendentry{$\varepsilon_1 = 0.0001$}
	\end{axis}
	\end{tikzpicture}
	\captionsetup{width=\textwidth}
	\caption{\footnotesize{Evolution of the maximum estimated error $\eta_{a, \mu}$, $\mu \in \ParamsTrain$ for different Stage~1 tolerances $\varepsilon_1$ during the greedy algorithm of Stage 2 ($n_H = 2^4$). The greedy algorithm has been continued until the enrichment failed. The first values for smaller tolerances are left out to improve readability.}}
	\label{Fig:estimator_study}
\end{figure}

\begin{figure}[t]
	\footnotesize
	\centering
	\begin{tikzpicture}
	\definecolor{color0}{rgb}{0.65,0,0.15}
	\definecolor{color1}{rgb}{0.84,0.19,0.15}
	\definecolor{color2}{rgb}{0.96,0.43,0.26}
	\definecolor{color3}{rgb}{0.99,0.68,0.38}
	\definecolor{color4}{rgb}{1,0.88,0.56}
	\definecolor{color5}{rgb}{0.67,0.85,0.91}
	\begin{axis}[
	name=left,
	anchor=west,
	width=6.5cm,
	height=4.5cm,
	log basis y={10},
	legend cell align={left},
	legend style={nodes={scale=0.7}, fill opacity=0.8, draw opacity=1, text opacity=1, at=(left.west), anchor=north east, xshift=-0.2cm, yshift=0.7cm, draw=white!80!black},
	x grid style={white!69.0196078431373!black},
	xlabel={basis enrichments},
	xmajorgrids,
	xtick style={color=black},
	y grid style={white!69.0196078431373!black},
	ymajorgrids,
	ymode=log,
	ymin=5e-08, ymax=10,
	ytick style={color=black},
	ytick pos=right
	]
	\addplot [semithick, color0, mark=x, mark size=4, mark options={solid, fill opacity=0.5}]
	table {%
		1 0.20583663158433416
		2 0.11887146368778598
		3 0.029783870160454723
		4 0.01489840725378891
		5 0.014067080101043957
		6 0.007252894748508427
		7 0.003190748490468686
		8 0.0016313851696358674
		9 0.0007856473067503389
		10 0.0007520128559338216
		11 0.00038889201711461214
		12 0.00038219438646071996
		13 0.00034000389283784436
	};
	\addplot [semithick, black, mark=diamond*, mark size=3, mark options={solid, fill opacity=0.2}]
	table {%
		1 0.013304715193246983
		2 0.009712437622371539
		3 0.002816468173892014
		4 0.0011318376672740094
		5 0.0010401116607180129
		6 0.0006290529115014175
		7 0.00014645370454404016
		8 0.00008648122221639462
		9 0.000045722930894291935
		10 0.00004620880598336765
		11 0.000021310199120887216
		12 0.00001932049892755662
		13 0.00001260466561162649
	};
	\addplot [semithick, color1, mark=square*, mark size=2, mark options={solid, fill opacity=0.2}]
	table {%
		1 0.01167264689554406
		2 0.008934578133502196
		3 0.002677895817283881
		4 0.000982896580291272
		5 0.0008893416590385863
		6 0.0005798216284228025
		7 6.493345512553352e-05
		8 5.704378512652952e-05
		9 3.59582145370613e-05
		10 3.677364143091256e-05
		11 1.7109809385720184e-05
		12 1.2277935969088866e-05
		13 2.4157504039053084e-06
	};
	\addplot [semithick, black, mark=square*, opacity=0.4, mark size=1.5, mark options={solid, fill opacity=0.5}]
	table {%
		1 0.20556287671660609
		2 0.11853521920869325
		3 0.029663239804960596
		4 0.014895622925657302
		5 0.014041884984912538
		6 0.00722968112105605
		7 0.0031902180521013682
		8 0.0016303875546281278
		9 0.0007852344218980461
		10 0.0007511131970519845
		11 0.00038867189820279414
		12 0.00038202110052247513
		13 0.00034000203935049633
	};
	
	\addplot [semithick, color0, mark=*, mark size=3, dashed, mark options={solid, fill opacity=0.5}]
	table {%
		1 1.309575235
		2 0.548060703
		3 0.141635678
		4 0.104847183
		5 0.059237218
		6 0.032599626
		7 0.014522967
		8 0.010109953
		9 0.003802794
		10 0.003003472
		11 0.001834614
		12 0.001739346
		13 0.001618788
	};
	
	\addplot [semithick, color5, mark=square*, mark size=3, mark options={solid, fill opacity=0.5}]
	table {%
		1 7.12906094171107
		2 7.3606022714764565
		3 6.969506743881207
		4 7.084436819956133
		5 6.662971068069086
		6 6.681572675067291
		7 6.359053731851468
		8 6.567192608633759
		9 6.530567472242883
		10 6.6733601290328055
		11 6.676811460567216
		12 6.59853821648667
		13 6.544061489519691
	};
	\addplot [semithick, color3, mark=triangle*, dashed, mark size=3, mark options={solid, fill opacity=0.5}]
	table {
		1 0.064086520
		2 0.033603723
		3 0.012175896
		4 0.004836665
		5 0.004318154
		6 0.002758986
		7 0.000401127
		8 0.000339618
		9 0.000153555
		10 0.000157558
		11 0.000087057
		12 0.000054408
		13 0.000013076
	};
	\end{axis}
	\begin{axis}[
	name=right,
	anchor=west,
	at=(left.east),
	xshift=0.9cm,
	width=6.5cm,
	height=4.5cm,
	log basis y={10},
	legend cell align={left},
	legend style={nodes={scale=0.7}, fill opacity=0.8, draw opacity=1, text opacity=1, at=(left.east), anchor=north east, xshift=-11cm, yshift=0.7cm, draw=white!80!black},
	tick align=outside,
	tick pos=left,
	x grid style={white!69.0196078431373!black},
	xlabel={basis enrichments},
	xmajorgrids,
	ymode=log,
	ymin=5e-08, ymax=10,	
	xtick style={color=black},
	y grid style={white!69.0196078431373!black},
	ymajorgrids,
	yticklabels={,,},
	ytick pos=left,
	]
	\addplot [semithick, color0, mark=x, mark size=4, mark options={solid, fill opacity=0.5}]
	table {
		1 0.20583663158433418
		2 0.11694206395586734
		3 0.02973104282919254
		4 0.0187861354798649
		5 0.013858503372350388
		6 0.012416870025239479
		7 0.0026686216441154298
		8 0.002635502193860233
		9 0.0007400980127743919
		10 0.0005443247667907771
		11 0.0004620402045593204
		12 0.0004127701959231501
		13 0.00034019560933311744
		14 0.00034018968282338157
		15 0.00034006575626278755
		16 0.00034005631077660875
		17 0.00034000677770389305
		18 0.00034000614761857314
		19 0.0003400047029756307
		20 0.00034000469392983946
		21 0.0003400046813423982
		22 0.0003400046754256699
		23 0.0003400046507017614
		24 0.00034000451666054874
		25 0.00034000442076877424
	};
	\addlegendentry{two-scale error}
	\addplot [semithick, black, mark=diamond*, mark size=3, mark options={solid, fill opacity=0.2}]
	table {
		1 0.013304715193246948
		2 0.009511015595946179
		3 0.0028113477767885784
		4 0.0010418776425366626
		5 0.000695950849759363
		6 0.0007067760148770652
		7 0.0001288287554918048
		8 0.00012289411897222592
		9 4.5151058847703486e-05
		10 2.9558799782869537e-05
		11 2.237760999975541e-05
		12 1.9830356206882944e-05
		13 1.2628238317123184e-05
		14 1.2625312273148166e-05
		15 1.2624222828842822e-05
		16 1.2622672901671814e-05
		17 1.2608270221566213e-05
		18 1.2607216790604492e-05
		19 1.2605206742112442e-05
		20 1.2605264311413092e-05
		21 1.2605117349898283e-05
		22 1.260517373978866e-05
		23 1.2605059145931392e-05
		24 1.2604687600214781e-05
		25 1.2605079504893575e-05		
	};
	\addlegendentry{LOD error}
	\addplot [semithick, color1, mark=square*, mark size=2, mark options={solid, fill opacity=0.2}]
	table {
		1 0.01167264689554405
		2 0.008727740382171471
		3 0.0026728177591846536
		4 0.0007560533227358547
		5 0.0005811156126797668
		6 0.0004809482874223695
		7 7.457221511493489e-05
		8 5.032735352846313e-05
		9 3.5721462437374986e-05
		10 2.3204288829405183e-05
		11 1.492326673635207e-05
		12 1.0182680403255702e-05
		13 2.0210071303287623e-06
		14 2.027257098624611e-06
		15 1.6766888163742802e-06
		16 1.1714618468972557e-06
		17 1.4449245167868556e-06
		18 1.12423583570912e-06
		19 1.1229860536982722e-06
		20 1.1230851612871708e-06
		21 1.1228418969885156e-06
		22 1.1229347412743485e-06
		23 1.122478451417174e-06
		24 1.1225804318428979e-06
		25 1.122916984414993e-06
	};
	\addlegendentry{$V_H$-error}
	\addplot [semithick, black, mark=square*, mark size=1.5, opacity=0.4, mark options={solid, fill opacity=0.5}]
	table {
		1 0.20556287671660609
		2 0.11661592031142051
		3 0.02961065607070313
		4 0.01877402382243087
		5 0.013853923304489067
		6 0.012407552134426684
		7 0.002667964814102145
		8 0.0026350286117542217
		9 0.0007392354466838959
		10 0.0005439894059769174
		11 0.00046195181513729793
		12 0.0004126528759191995
		13 0.0003401935686116118
		14 0.00034018763966547634
		15 0.00034006375702974264
		16 0.0003400542929829571
		17 0.00034000491152417437
		18 0.00034000428896149035
		19 0.0003400028484408183
		20 0.00034000283906762354
		21 0.00034000282728356853
		22 0.00034000282106018176
		23 0.0003400027978428308
		24 0.00034000266346419613
		25 0.00034000256646054275
	};
	\addlegendentry{corrector error}
	
	\addplot [semithick, color0, mark=*, mark size=3, dashed, mark options={solid, fill opacity=0.5}]
	table {
		1 1.309575235
		2 0.538279469
		3 0.143527566
		4 0.129353986
		5 0.096122047
		6 0.091427914
		7 0.013125357
		8 0.012644495
		9 0.002991285
		10 0.002984935
		11 0.002913277
		12 0.002555585
		13 0.001619900
		14 0.001619876
		15 0.001619153
		16 0.001619044
		17 0.001618832
		18 0.001618806
		19 0.001618797
		20 0.001618799
		21 0.001618796
		22 0.001618799
		23 0.001618800
		24 0.001618798
		25 0.001618796
	};
	\addlegendentry{$\eta_{a, \mu}$}		
	
	\addplot [semithick, color3, mark=triangle*, mark size=3, dashed, mark options={solid, fill opacity=0.5}]
	table {
		1 0.064086520
		2 0.032843009
		3 0.012154224
		4 0.005207466
		5 0.002802488
		6 0.003292533
		7 0.000361567
		8 0.000210742
		9 0.000153361
		10 0.000119555
		11 0.000072419
		12 0.000062428
		13 0.000008881
		14 0.000008243
		15 0.000007220
		16 0.000004308
		17 0.000007923
		18 0.000001845
		19 0.000001591
		20 0.000000897
		21 0.000000279
		22 0.000000216
		23 0.000000209
		24 0.000000103
		25 0.000000080
	};
	\addlegendentry{$\eta_{a, \mu}$, $\rho = 0$}
	\addplot [semithick, color5, mark=square*, mark size=3, mark options={solid, fill opacity=0.5}]
	table {
		1 7.129060941711069
		2 7.3656688584410945
		3 6.9824580069300275
		4 6.885609149899999
		5 7.333056748247623
		6 7.3632013268725025
		7 6.471238841821415
		8 6.566019043110956
		9 6.626142811888608
		10 6.624813706577451
		11 6.658917543462653
		12 6.556560885917927
		13 6.475931924186771
		14 6.483625325095644
		15 6.4830024301230464
		16 6.477027190303722
		17 6.482799619476211
		18 6.481894302317812
		19 6.481905020327957
		20 6.478519232309123
		21 6.4822406600512705
		22 6.482246722522054
		23 6.482690110890613
		24 6.4811915922350805
		25 6.481407203351824
	};
	\addlegendentry{effectivity $\eta_{a, \mu}$}
	
	\end{axis}
	\node[anchor=south east, yshift=4pt, xshift=-20pt] at (left.north east) {(A) train with $\eta_{a, \mu}$};
	\node[anchor=south west, yshift=4pt, xshift=20pt] at (right.north west) {(B) train with $\eta_{a, \mu}$, $\rho = 0$};
	\end{tikzpicture}
	\captionsetup{width=\textwidth}
	\caption{\footnotesize{Comparison of the Stage~2 training error decay using different error estimators ($\varepsilon_1=0.001$). The greedy algorithm is continued until the enrichment fails or $25$ enrichments are reached.
    Depicted is the maximum value of the full two-scale error $\Anorm{\utsm - \uts^{rb}_\mu}$, the LOD error given by
    $\Anorm{\utsm - \uts^{rb}_\mu}$ with $\rho=0$,
    the $V_H$-error $\anorm{\uhkm - \uhkm^\text{rb}}$,
	the fine-scale corrector error $\rho^{1/2} \cdot (\sumT \anorm{\QQktm(\uhkm^\text{rb}) -  u^\text{rb,f}_T}^2)^{1/2}$,
    the estimator $\eta_{a, \mu}$, its effectivity $\eta_{a, \mu}/\Anorm{\utsm - \uts^{rb}_\mu}$
    and the part of $\eta_{a, \mu}$ corresponding to the LOD residual (obtained by setting $\rho=0$).
    The maximum is computed over the training set $\ParamsTrain$.
    The error estimator used in the greedy algorithm is either (A) the two-scale error estimator $\eta_{a, \mu}$ or (B)
    only its LOD-residual part ($\rho=0$).}}
	\label{Fig:estimator_study_2}
\end{figure}
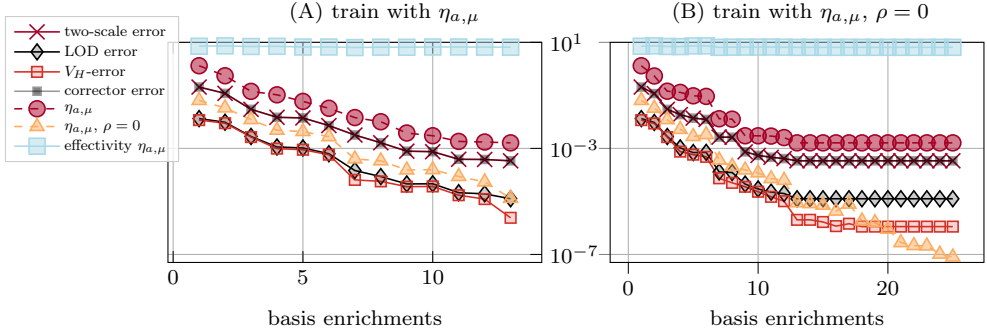

Next we study the influence of the Stage~1 tolerance $\varepsilon_1$ on the training of the Stage 2 ROM.
To this end, we fix the number of coarse mesh elements $n_H = 2^4$ and
depict in \cref{Fig:estimator_study} for different $\varepsilon_1$ the maximum estimated training error w.r.t.\ the number of basis
functions of the Stage 2 ROM.
For all $\varepsilon_1$, the Stage~2 training was continued until enrichment failed due to repeated selection of the
same snapshot parameter (cf.\ \cref{sec:stage_2_bas_gen}).
As expected, a sufficiently small $\varepsilon_1$ is required to achive small errors for the Stage~2 ROM.
Note, however, that choosing a smaller $\varepsilon_1$ for the same $\varepsilon_2$ only affects the offline time of the
Stage 2 training, but not the efficiency of the resulting Stage~2 ROM.

In \cref{Fig:estimator_study_2}(A), we study in more detail how the two-scale error $\Anorm{\utsm - \utsm^\text{rb}}$ and its estimator $\eta_{a, \mu}$ are affected by the coarse- and fine-scale errors in the two-scale system.
We observe that the greedy algorithm aborts when the error in the fine-scale correctors stagnates
at the lower bound determined by the fixed Stage~1 ROMs which are used to generate the Stage~2 solution snapshots.\@
While the LOD error ($\Anorm{}$ with $\rho = 0$) and the corresponding residual norms ($\eta_{a,\mu}$ with $\rho = 0$)
decrease over all iterations, the corrector residuals dominate $\eta_{a,\mu}$, finally causing the same snapshot parameter
to be selected twice.

To verify that the enrichment procedure should, indeed, be stopped at this point,
we perform another experiment where we neglect the corrector residuals and use $\eta_{a, \mu}$ with $\rho = 0$ as the error surrogate in the greedy algorithm.
This corresponds to treating the RBLOD coarse system as the FOM w.r.t.\ which the MOR error is estimated.
The result is visualized in \cref{Fig:estimator_study_2}(B).
We see that, while the estimator rapidly decays over all 25 enrichments, both the two-scale and LOD errors do not decrease any
further.
This is expected as both error measures involve the error in the fine-scale correctors.
However, also the $V_H$-error $\anorm{\uhkm - \uhkm^\text{rb}}$ stagnates after one further iteration,
and the estimator eventually underestimates all three errors.
This underlines that the fine-scale corrector errors need to taken into account, even if one is only interested in a coarse-scale approximation in $V_H$.

\subsection{Test case 2}
We now consider a more complex test case with significantly larger fine-scale mesh $\grid$.
The three-dimensional parameter space is given by $\Params := [1,5]^3$,
and we let $A_\mu:=\sum_{q=1}^{3}\mu_q A_q$, where for a representative patch of four coarse-mesh elements the randomly generated functions $A_q$ are given according to~\cref{mp2:pics}, and we again set $f \equiv 1$.
The exact definition of the functions $A_q$ can be found in the accompanying code.
The approximate maximum contrast is $\kappa \approx 16$.

To fully resolve the microstructure on all $4096$ ($n_H = 2^{6}$) coarse elements, we need to choose
$n_h = 2^{13}$ which results in about $67.1$ million degrees of freedom.
The local corrector problems have a size of roughly $1.3$ million degrees of freedom.
With 1024 available parallel processes, each process must train ROMs for $4$ coarse elements.

\begin{figure}
	\begin{center}
		\begin{subfigure}[b]{0.38\textwidth}
			\includegraphics[width=\textwidth]{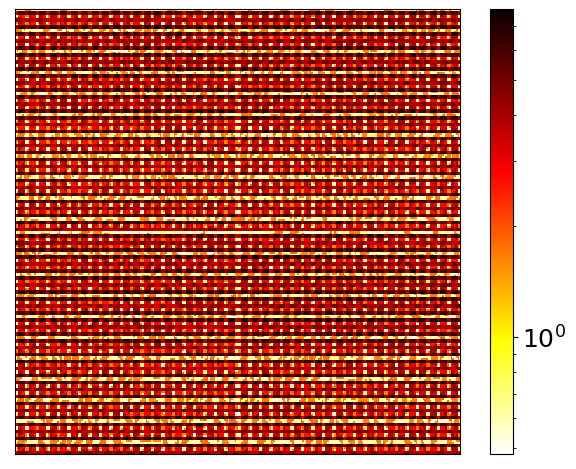}
		\end{subfigure}
	\end{center}
	\begin{subfigure}[b]{0.26\textwidth}
		\includegraphics[width=\textwidth]{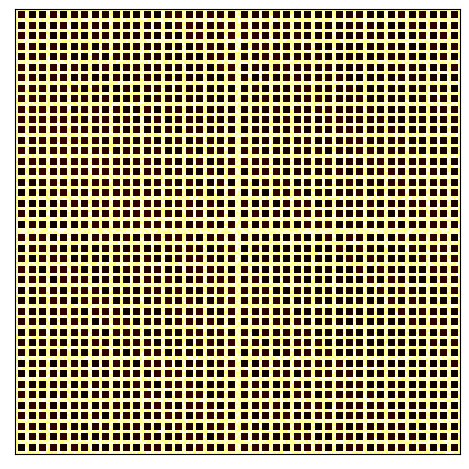}
	\end{subfigure}
	\begin{subfigure}[b]{0.26\textwidth}
		\includegraphics[width=\textwidth]{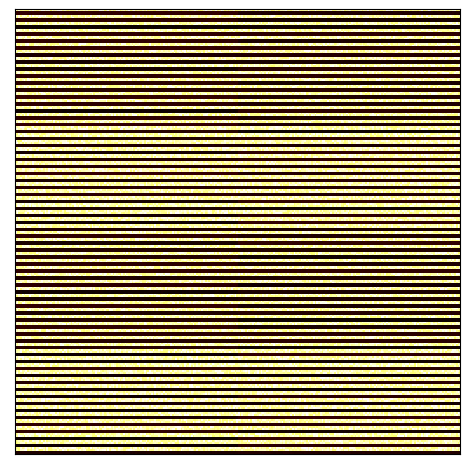}
	\end{subfigure}
	\begin{subfigure}[b]{0.328\textwidth}
		\includegraphics[width=\textwidth]{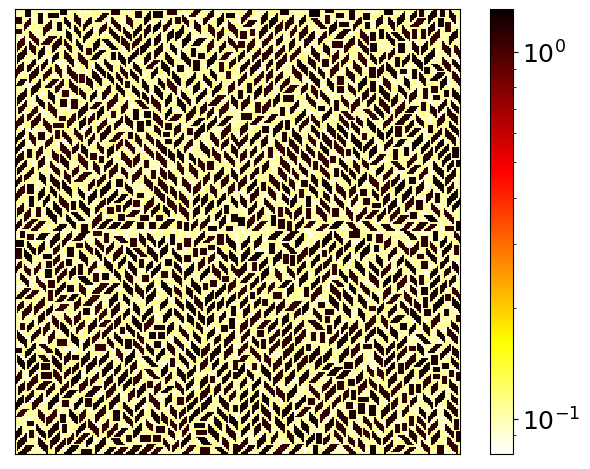}
	\end{subfigure}
	\centering
	\caption{\footnotesize{Coefficient $A_\mu$ on $4$ of $4096$ coarse elements for $\mu=(1,2,3)^T$ (top center) and $A_q$ for all $q=1,2,3$ (bottom from left to right).
			All coefficients $A_q$ are subjected to normally distributed noise in the interval $[1,1.2]$ for the particles (black) and in the interval $[0.03,0.11]$ for the background (yellow). To ensure reproducibility, for every coarse-mesh element $T$, we choose the random seed for the noise and for the distribution of the randomly shaped particles in $A_3$ as the global index of $T$ in $\Gridh$.}}
	\label{mp2:pics}
\end{figure}

We choose a training set $\ParamsTrain \subset \Params$ of $4^3$ equidistant parameters
and show results for $\varepsilon_1 = 0.01$ and $\varepsilon_2=0.02$ in \cref{Tab:mp2:1}. 
Again, the TSRBLOD shows high online efficiency with speed-ups of up to $10^6$, clearly outperforming the RBLOD.
We note that the reported online time of $t^{\text{online}}=4.39\,\mathrm{s}$ for the RBLOD roughly splits into $2.2\,\mathrm{s}$ required for sequentially solving the reduced corrector problems (Step~1 in \cref{sec:sub_comp_comp}), $1.2\,\mathrm{s}$ for assembling the coarse system (Step~2) and
$1\,\mathrm{s}$ for solving the coarse system (Step~3).
In particular, the further speed-up that could be achieved for the RBLOD by parallelizing the corrector problems is bounded by a factor of approximately 2.

Also, the storage requirements are noteworthy:
the reduced data required for evaluating the TSRBLOD ROM and $\eta_{a,\mu}$ is only 28~KB in
size, whereas the RBLOD requires 409~MB.

\begin{table}
	\footnotesize
	\centering
	{\def\arraystretch{1.7}\tabcolsep=3pt
		\begin{tabular}{|l|c|c|}
			method  &\hphantom{T}RBLOD\hphantom{T} & TSRBLOD 
			\\ \hline  \hline
			$t^\text{offline}_{1}(T)$   &   10278	& 	11289	\\
			$t^\text{offline}_{1}$   	&   49436 	& 	54837	\\
			$t^\text{offline}_2$   		& 	- 		&	9206 	\\
			$t^\text{offline}$    		& 	49436 	&	64043	\\ \hline
			cum. size St.1 				&	278528	&	193289 	\\
			av. size St.1           	&	17.00  	&	47.19	\\
			size St.2 					& 	- 		& 	16 		\\
			\hline \hline
			$t^{\text{LOD}}$ &  	\multicolumn{2}{c|}{515} \\ \hline
			$t^\text{online}$ 		 	            & 	4.39 	& 	0.0005	\\ \hline \hline
			speed-up w.r.t LOD 	 		            & 	117  	&	9.57e5	\\ \hline \hline
			$e^{H^1, \text{rel}}_{\text{LOD}}$ 	 	&	1.95e-5 &	4.43e-4 \\
			$e^{L^2, \text{rel}}_{\text{LOD}}$ 	 	&	2.36e-5 &	4.49e-4 \\ 
			\hline
	\end{tabular}}
	\captionsetup{width=\textwidth}
	\caption{\footnotesize{%
			Performance, ROM sizes and accuracy of the methods for test case 2 with $\varepsilon_1 = 0.01$ and $\varepsilon_2=0.02$. All times are given in seconds.
			\label{Tab:mp2:1}}}
\end{table}

\section{Concluding remarks and future work}\label{sec:conclusion}
In the work at hand, we have derived a new two-scale reduced scheme for the PG--LOD.\@
Due to the two-stage reduction process and the independence of the resulting ROMs from the size of both the fine-scale
and the corse-scale meshes, our approach is effective even for large-scale problems.
For ease of presentation, we have assumed non-parametric right-hand sides and did not cosider output functionals.
Incorporating both into our approach is straightforward.
Furthermore, for very large coarse meshes, additional intermediate reduction stages could be added to further
reduce the needed computational effort in Stage 2.
Instead of a fixed a priori choice of the Stage~1 tolerance $\varepsilon_1$, the Stage~1 ROMs could be adaptively enriched
during Stage~2 when an insufficient approximation quality of some of the Stage~1 ROMs is detected.
The presented methodology could also be applied to other problem classes and to other multiscale methods.

\section*{Code availability}
All experiments have been implemented in \texttt{Python} using \texttt{gridlod}~\cite{gridlod} for the PG--LOD discretization and \texttt{pyMOR}~\cite{pymor} for the Model Order Reduction.
For the complete source code of all experiments including setup instructions, we refer to~\cite{code}.

{\small
\bibliographystyle{plain}

}

\end{document}